\documentclass[12pt,a4paper,oneside]{article}
\usepackage{arxiv}

\usepackage[utf8]{inputenc}
\usepackage[numbers]{natbib}
\usepackage{graphicx}
\usepackage{placeins} 

\usepackage{url}

\usepackage{amsmath}
\usepackage{amsfonts}       
\usepackage{enumitem}
\usepackage{tabularx,booktabs} 
\usepackage{makecell}
\usepackage{multirow, makecell}

\usepackage{pdflscape}

\usepackage{wrapfig} 
\usepackage{empheq} 

\usepackage{amsthm}
\newtheorem{theorem}{Theorem}[section]
\newtheorem{corollary}[theorem]{Corollary}
\newtheorem{lemma}[theorem]{Lemma}
\newtheorem{proposition}[theorem]{Proposition}
\newtheorem{remark}[theorem]{Remark}
\newtheorem{definition}{Definition}
\newtheorem{assumption}{Assumption}

\newtheorem*{notation}{Notation}

\usepackage{ulem}
\usepackage{mathtools}

\usepackage[x11names]{xcolor}

\usepackage[colorlinks=true, linkcolor=blue, citecolor=blue, urlcolor=blue]{hyperref}
\usepackage{float}
\newcommand{\R}{\mathbb{R}}
\newcommand{\N}{\mathbb{N}}

\newcommand{\norm}[1]{\left\| #1 \right\|}
\newcommand{\prox}[1]{\operatorname{prox}_{#1}} 
\newcommand{\Id}{\operatorname{Id}}
\newcommand{\lc}{L_{\varphi}}
\newcommand{\lp}{L_{\Phi}}
\usepackage{xspace} 
\newcommand{\algoone}{\textbf{GD-RGA}\xspace}
\newcommand{\algotwo}{\textbf{PD-RGA}\xspace}
\usepackage{comment}
\numberwithin{equation}{section}

\title{On the Convergence of Proximal Algorithms for Weakly-convex Min-max Optimization}

\author{
	\textbf{Guido Tapia-Riera}$^\ast$ \quad \textbf{Camille Castera} \quad \textbf{Nicolas Papadakis}\\
	Univ. Bordeaux\\
	CNRS, INRIA, Bordeaux INP, IMB, UMR 5251\\
	F-33400 Talence, France
}

\makeatletter

\begin{document}

	\maketitle

	\begin{abstract}
    We study alternating first-order algorithms with no inner loops for solving nonconvex--strongly-concave min-max problems.
	We show the convergence of the alternating gradient descent--ascent algorithm method by proposing a substantially simplified proof compared to previous ones. It allows us to enlarge the set of admissible step-sizes. Building on this general reformulation, we also prove the convergence of a doubly proximal algorithm in the weakly convex--strongly concave setting. Finally, we show how this new result opens the way to new applications of min-max optimization algorithms for solving regularized imaging inverse problems with neural networks in a plug-and-play manner.
	\end{abstract}

	\renewcommand*{\thefootnote}{$^\ast$}
	\footnotetext[1]{Corresponding author: \texttt{guido-samuel.tapia-riera@math.u-bordeaux.fr}}
	\renewcommand*{\thefootnote}{\arabic{footnote}}
	\setcounter{footnote}{0} 

		\section{Introduction}
		A challenge in min-max optimization is to design algorithms that remain applicable in the absence of convexity and/or concavity of the objective functions. This challenge arises in tasks such as training generative adversarial networks (GANs) \cite{goodfellow2020GANS}, adversarial learning \cite{pmlr-v119-liu20j}, robust training of neural networks \cite{madry2018towards}, distributed signal processing \cite{Giannakis2016,8340193}, (see \cite{razaviyayn2020nonconvex} further details on applications). We consider optimization problems of the form
		\begin{equation}\label{introduction_eq_1}
			\min_{x \in \R^d} \max_{y \in \R^n}  \left\{ \Phi(x,y)-h(y) \right\}
		\end{equation}
		where $\Phi: \R^d \times \R^n \to \R$, called the \emph{coupling}, 
		is differentiable with Lipschitz-continuous gradient. We assume that $\Phi$ is concave with respect to $y$, and $\rho-$weakly convex in $x$, for some $\rho>0$. This means that $\forall y\in\R^n$, $\Phi(\cdot, y)$ is possibly nonconvex but $\Phi(.,y) + \rho||.||^2$ is convex. The function $h: \R^n \to \R \cup \{+\infty\}$ is proper, convex and lower-semi-continuous, and referred to as the \emph{regularizer}.
		
		This paper studies the following two alternating algorithms for tackling \eqref{introduction_eq_1}: for all $k \geq 1$,
		
		\begin{minipage}[t]{0.47\textwidth}
			{\footnotesize Gradient Descent--Regularized Gradient Ascent}\\
			(\algoone) 
			$$
			\begin{cases}
				x_{k+1} &= x_k - \eta_x \nabla_x \Phi(x_k,y_k) \\
				y_{k+1} &= \prox{\eta_y h}\left(y_k + \eta_y \nabla_y \Phi(x_{k+1}, y_k)\right),
			\end{cases}	
			$$
		\end{minipage}
		\hfill
		\begin{minipage}[t]{0.48\textwidth}
			{\footnotesize Proximal Descent--Regularized Gradient Ascent}\\
			(\algotwo)
			$$
			\begin{cases}
				x_{k+1} &= \prox{\eta_x \Phi(\cdot,y_{k})}(x_k)\\
				y_{k+1} &= \prox{\eta_y h}\left(y_k + \eta_y \nabla_y \Phi(x_{k+1}, y_k)\right),
			\end{cases}
			$$
		\end{minipage}
		\vspace{0.2cm}\\
		where $\eta_x, \eta_y >0$ are step-sizes, $\prox{}$ is the proximal operator (see Definition~\ref{proximal_operator}), and $\nabla_x, \nabla_y$ denote the partial derivative (or gradient) w.r.t. $x$ and $y$, respectively. 
		
		The main advantage of \algoone and \algotwo is that they do not require solving exactly the inner problem (the $\max$ in \eqref{introduction_eq_1}) at each iteration. As such they circumvent the computational expensiveness of bi-level optimization approaches that make use of inner loops \cite{ijcai2025p741-bilevel,bard1998practical,dempe2005foundations}. It is well known that when the coupling term $\Phi$ is bilinear, the iterates of simultaneous (\emph{i.e,} both variables update based on the previous iterate) first-order methods used to solve problem~\eqref{introduction_eq_1} may diverge, whereas those of the alternating scheme remain bounded. In recent years, this behavior, together with more favorable convergence properties, has led alternating methods to be preferred over their simultaneous counterparts in practice~\cite{pmlr-v125-bailey20a,pmlr-v89-gidel19a,Xu2023projalgo}. \algoone relies on explicit gradient descent--ascent updates of the coupling term $\Phi$
		and has previously been studied \cite{bot2023lternatingproximalgradientstepsstochastic,lin2025two}. In contrast, proximal (or implicit) updates of $\Phi$, as in \algotwo, have received significantly less attention. Our interest in this second approach is motivated by recent advances 
		where neural networks are designed and trained to act as firmly non-expansive operators~\cite{pesquet2021learning} or as proximal operators of weakly convex functions~\cite{Hurault2022ProximalDF}.

		Algorithms for solving \eqref{introduction_eq_1} have been thoroughly studied when $\Phi$ is \emph{convex--concave} \cite{chambolle2011first,pmlr-v125-golowich20a,Hamedani2021primaldual,Daskalakis2021Thecomplexityofconstrainedminmaxoptimization}. There, under appropriate constraint qualifications \cite{rockafellar-1970a,correa2023fundamentals}, algorithms may converge to so-called saddle points $(x^\ast,y^\ast)$, \textit{i.e.,} satisfying $\Phi(x^*,y) - h(y) \le \Phi(x^*,y^*) - h(y^*) \le \Phi(x,y^*) - h(y^*)$, for all $(x,y) \in \mathbb{R}^d \times \mathbb{R}^n$. However, this characterization of solutions fails in the \emph{nonconvex--strongly concave} setting (only the upper inequality holds true). In prior work on the nonconvex setting \cite{bot2023lternatingproximalgradientstepsstochastic,Cohen2025AlternatingandParallelProximalGradientMethods,lin2025two}, the problem~\eqref{introduction_eq_1}  is reformulated as
		\begin{equation}\label{introduction_eq_2}
			\min_{x \in \mathbb{R}^d} \; \varphi(x),\quad \text{where}\quad \varphi(x):= \max_{y \in \mathbb{R}^n} \; \Phi(x,y) - h(y),
		\end{equation}
		and the notion of solution is relaxed to merely finding approximations of critical points of $\varphi$. Our work considers the case where \eqref{introduction_eq_2} is nonconvex but differentiable, for which we will require either $h$ to be strongly-concave or $\Phi$ to be strongly concave w.r.t.\ its second argument (see Proposition~\ref{Smoothness_of_the_max_function}). In this setting, we aim to find $\varepsilon$-stationary points, defined next.
		
		\begin{definition}[$\varepsilon$-stationary points]\label{def_epsilon_stationary} 
			Let $\epsilon \geq 0$. We say that $x$ is a $\varepsilon$-stationary point of $\varphi$ if $\norm{\nabla \varphi(x)} \leq \varepsilon$. If $\varepsilon=0$, then $x$ is a stationary point.
		\end{definition}
		
		\subsection{Main contributions}
		For \algoone, we revisit the proof of convergence to the $\varepsilon$-stationary points of \eqref{introduction_eq_2}.
		By getting rid of approximation in the key parts of the analysis (see Lemma~\ref{Algo1-lemma-3}), we obtain tighter bounds on the range of admissible step-sizes \algoone (see Theorem~\ref{thm_non_convex_strongly_concave_case_1}) and provide a significantly shorter proof than that of prior work \cite{bot2023lternatingproximalgradientstepsstochastic,lin2025two}.
		We prove the convergence of \algotwo (Theorem~\ref{main_thm_algo_2}) with the same complexity as \algoone. To the best of our knowledge, this is the first proof of convergence for this algorithm in the weakly-convex setting. Finally, we use \algotwo to tackle imaging inverse problems where the proximal operator $\prox{\eta_x \Phi(\cdot,y_k)}$ in \algotwo is implemented by a neural network in a plug-and-play fashion.

		\subsection{Related Work}
		Lower bounds have been derived for first-order algorithms in the nonconvex-strongly-concave smooth setting~\cite{pmlr-v161-zhang21c,NEURIPS2021_0e105949}. They state that a dependence of order $\varepsilon^{-2}$ in the number of iterations required to reach an $\varepsilon$-stationary point is unavoidable, and our methods attain this rate. We point out that stochastic first-order methods for solving~\eqref{introduction_eq_1} have also been studied, primarily using the geometrical structure of the objective function, in particular the Polyak--\L{}ojasiewicz condition~\cite{pmlr-v80-liu18g}, and variance reduction techniques~\cite{rafique2022weakly}. While nonsmooth first-order methods arise when, instead of assuming strong concavity, one considers concavity or nonconcavity in the variable $y$ of the coupling term $\Phi$ or in the regularizer $-h$ or when non-smoothness is assumed in the coupling term~\cite{liu2021first,razaviyayn2020nonconvex,lin2025two}. 
		
		\textbf{Closely-related work on \algoone.} \ \algoone can be seen as a generalization of the smooth version of the algorithm proposed in~\cite{lin2025two}, where convergence to $\varepsilon$-stationary points is established under the assumption that $\Phi(x,\cdot)$ is strongly concave for all $x$ and that $h$ is the indicator function of a nonempty, convex, and bounded set for which $\prox{h}$ is the projection on this set. We relax the strong concavity assumption on $\Phi(x,\cdot)$ by allowing it to be \emph{optionally} carried by $-h$. In addition, our analysis allows for general $h$ and corresponding proximal operators $\prox{h}$. In particular, when $\Phi(x,\cdot)$ is strongly concave we drop the regularization by taking $h \equiv 0$ which is not possible in \cite{lin2025two}.  Finally,  we provide a larger range of admissible step-sizes than in~\cite{lin2025two}.
		
		The algorithm \algoone has also a strong connection with Algorithm~2.1 in~\cite{bot2023lternatingproximalgradientstepsstochastic} where the authors include an additional convex regularizer $f$ 
		w.r.t. to the variable $x$, and used via its proximal operator $\prox{f}$. The results therein apply to the setting we consider for \algoone when the strong concavity originates from $-h$. The key difference in their analysis is that they use the properties of the proximal operator associated with $h$, $\prox{h}$, when $\Phi$ is strongly concave w.r.t. $y$, we depart from this approach and rather make use of the ascent lemma (see Lemma~\ref{Descen_lemma}). This separation allows us to relax the conditions imposed on the step-sizes.
		
		We note that both aforementioned works assume that the coupling $\Phi$ has a jointly Lipschitz continuous gradient, \textit{i.e}, there exists $L_{\Phi}>0$ such that $\norm{\nabla \Phi(x,y) - \nabla \Phi(x',y')} \leq L_{\Phi} \norm{(x,y)-(x',y')}, \quad \forall x,x' \in \R^d,\ y,y' \in \R^n.$ We relax this assumption by requiring only block-wise Lipschitz continuity of the gradient (see Assumption \ref{assumption_1}, \ref{assumption_2}, and \ref{assumption_3}), which allows us to get a refined range of step-sizes, by considering tighter inequalities.
		
		Similarly to~\cite{bot2023lternatingproximalgradientstepsstochastic}, the work~\cite{Cohen2025AlternatingandParallelProximalGradientMethods} introduces a regularizer $f$ w.r.t.\ the $x$-variable. Rather than focusing on $\varepsilon$-stationary points, they establish that any accumulation point of $\varphi + f$ is indeed a critical point under additional assumptions than those considered in our work, in particular the semialgebraicity of the objective function, which are beyond the scope of our work. Since we share the block-wise Lipschitz continuity assumption of the gradient of $\Phi$, the mapping $\varphi$ exhibits properties similar to those studied in~\cite{bot2023lternatingproximalgradientstepsstochastic}. Furthermore, when $f \equiv 0$, their algorithm reduces to \algoone; however, the corresponding convergence analyses differ significantly.
		
		\textbf{Related work for \algotwo.} \ While \algotwo is less studied, \cite{bot2022acceleratedminimaxalgorithmconvexconcave} considers a similar algorithm that differs by featuring an additional inertial step on the variable $y$. 
		However the theoretical analysis in~\cite{bot2022acceleratedminimaxalgorithmconvexconcave} covers the convex (and non-smooth) setting, and is only valid for non-zero momentum parameters. In comparison, we do not consider inertial steps, and allow the function to be weakly-convex in $\Phi$ w.r.t. $x$, which requires a significantly different analysis.
		
		\textbf{Organization.} \ The paper is organized as follows. Section~\ref{section_preliminary} provides tools of convex and nonconvex optimization including regularity properties of the mapping $\varphi$ defined in \eqref{introduction_eq_2}. Convergence of \algoone and \algotwo to $\varepsilon$-stationary points is studied in Sections~\ref{section_convergence_analysis_for_algorithm_1} and~\ref{section_convergence_analysis_for_algorithm_2}, respectively.  Finally, Section~\ref{section_numerical_experiments} presents numerical experiments, including Plug-and-Play image restoration.

		\section{Preliminary}\label{section_preliminary}
		
		In this section, we provide the necessary background on weakly and strongly convex functions, smooth (differentiable) functions, proximal operators associated with weakly convex functions, and regularity properties of the mapping $\varphi$ defined in~\eqref{introduction_eq_2}. These concepts constitute essential tools for the convergence analysis of \algoone and \algotwo towards an $\varepsilon$-stationary point.
		
		\paragraph{Notation} We denote by $\|\cdot\|$ the $\ell^2$-norm, by $\Id$ the identity operator, by $\mathcal{O}$ the big O notation, by $\N_0 = \N \cup \{0\}$, and  by $\norm{\cdot}_S$ the spectral norm of a matrix.
		
		\subsection{Tools for min-max optimization}
		\begin{definition}[Strong/weak convexity]\label{def_strong_weak_convexity} 
			Let $g: \R^d \to \R \cup \{+\infty\}$ and $\alpha>0$, then $g$ is $\alpha$-strongly convex if $ g(x)- \frac{\alpha}{2} \norm{x}^2$ is convex and  $\alpha$-weakly convex if $ g(x)+ \frac{\alpha}{2} \norm{x}^2$ is convex.
		\end{definition}
		We say that a function $f$ is strongly concave if $-f$ is strongly convex.
		
		\begin{definition}[Subdifferential]\label{subdiferential_definition}
			Let $g : \R^d \to \R \cup \{+\infty\}$ and $x, y \in \mathbb{R}^d$.
			If for all $x' \in \mathbb{R}^d$ we have that 
			\begin{equation*}
				g(x') \geq g(x) + \langle z, x' - x \rangle,
			\end{equation*}
			we say that $z$ is a subgradient of $g$ at $x$. The set of all subgradients of $g$ at $x$, denoted by $\partial g(x)$, is called the subdifferential of $g$ at $x$.
		\end{definition}
		
		\begin{definition}[$L$-smoothness]
			We say that a differentiable function $g$ is $L$-smooth if $\nabla g$ is $L$-Lipschitz continuous, that is, there exists $L\geq 0$ such that for all $x,x' \in\R^d$,
			$$
			\norm{\nabla g(x) - \nabla g(x')} \leq L \norm{x - x'}.
			$$
		\end{definition}
		
		We recall an important property for $L$-smooth functions, called the descent lemma, see e.g. \cite[Lem.1.2.3, pp. 25]{Nesterovbookconvexopti}.
		\begin{lemma}[Descent/Ascent lemma]\label{Descen_lemma}
			For an $L$-smooth function $g : \mathbb{R}^d \to \mathbb{R}$ we have that
			\begin{equation*}
				\lvert g(x') - g(x) - \langle \nabla g(x), x' - x \rangle \rvert \leq  \frac{L}{2} \|x' - x\|^2 \quad \forall x, x' \in \mathbb{R}^d.
			\end{equation*}
		\end{lemma}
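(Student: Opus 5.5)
The plan is to reduce the statement to a one-dimensional problem along the segment from $x$ to $x'$ and then apply the fundamental theorem of calculus. We set $\gamma(t) = x + t(x'-x)$ for $t\in[0,1]$ and $\psi(t) = g(\gamma(t))$. Since $g$ is differentiable with continuous (indeed Lipschitz) gradient, $\psi$ is continuously differentiable with $\psi'(t) = \langle \nabla g(\gamma(t)), x'-x\rangle$. Hence
\begin{equation*}
g(x') - g(x) = \psi(1)-\psi(0) = \int_0^1 \langle \nabla g(x + t(x'-x)), x'-x\rangle \, dt .
\end{equation*}

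Next we subtract the constant term $\langle \nabla g(x), x'-x\rangle = \int_0^1 \langle \nabla g(x), x'-x\rangle\,dt$. This gives
\begin{equation*}
g(x') - g(x) - \langle \nabla g(x), x'-x\rangle = \int_0^1 \langle \nabla g(x+t(x'-x)) - \nabla g(x), x'-x\rangle\,dt .
\end{equation*}
We take absolute values and move them inside the integral. Cauchy--Schwarz then bounds the integrand by $\norm{\nabla g(x+t(x'-x)) - \nabla g(x)}\,\norm{x'-x}$. By $L$-smoothness this is at most $L t \norm{x'-x}^2$. Integrating $t$ over $[0,1]$ produces the factor $\tfrac12$, which yields $\tfrac{L}{2}\norm{x'-x}^2$, as claimed.

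There is no real obstacle here. The only point requiring care is justifying the integral representation, namely the continuity of $\psi'$ needed for the fundamental theorem of calculus; this follows directly from the Lipschitz continuity of $\nabla g$. Because the bound is on the absolute value, it gives both directions at once: the descent inequality (upper bound) and the ascent inequality (lower bound). No convexity of $g$ is used, which is what allows the lemma to be applied to concave functions of $y$ as well.
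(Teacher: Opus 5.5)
Your proof is correct and is essentially the standard argument: the paper gives no proof of its own and cites Nesterov's Lemma~1.2.3, which is proved in exactly this way (integral representation along the segment, then Cauchy--Schwarz and the Lipschitz bound $\norm{\nabla g(x+t(x'-x))-\nabla g(x)}\leq Lt\norm{x'-x}$, integrated over $t\in[0,1]$). You are also right that bounding the absolute value yields both the descent and the ascent inequalities without using convexity, which is how the paper applies the lemma to $\varphi$ and to the concave function $\Phi(x_{k+1},\cdot)$.
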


		We now define the proximal operator, at the heart of \algoone and \algotwo.
		
		\begin{definition}[Proximal operator]\label{proximal_operator}
			Let $g: \R^d \to \R$ be $\rho$-weakly convex, proper and lower semicontinous  (l.s.c) and let $\tau>0$. For all $x\in\R^d$, the proximal operator of $g$ at $x$ with step-size $\tau$ is defined as:
			\begin{equation}\label{proximal_operator_definition}
				\displaystyle 
				\prox{\tau g} (x) \in \operatorname{arg min}_{z \in \R^d} \left\{ g(z) + \frac{1}{2 \tau}\norm{z-x}^2 \right\}.
			\end{equation}
		\end{definition}
		The set $\prox{\tau g} (x)$ is non-empty, single-valued when $\tau\rho<1$ \cite[Def. 2.2]{renaud2025moreauenvelope}.
		Convex functions are $0$-weakly convex, making their proximal operator always single-valued.

		\subsection{Ingredients for the convergence analysis}
		In this subsection, we present important properties of the mapping $\varphi$. To this end, we make the following assumptions:
		\begin{assumption}\label{assumption_1}
			\begin{enumerate}[label=(\roman*)]
				\item Let $y \in \R^n$ there exists $L_{yx}>0$ such that for all $x_1,x_2 \in \R^d$ we have that $\norm{\nabla_y \Phi(x_1,y)-\nabla_y \Phi(x_2,y)} \leq L_{yx} \norm{x_1-x_2}$.
				\item For all $x \in \R^d$, $\Phi(x,\cdot)$ is concave. Furthermore, there exists $\mu>0$ such that either $\Phi(x,\cdot)$ is $\mu$-strongly concave for all $ x \in \R^d$, or $-h$ is $\mu$-strongly concave.
				\item The regularizer $h$ is proper, l.s.c, and convex. 
			\end{enumerate}
		\end{assumption}
		
		Now, thanks to Assumption~\ref{assumption_1}, we can establish the well-posedness of $\varphi$ using \cite[Lem. 1, pp. 146]{Cohen2025AlternatingandParallelProximalGradientMethods}:
		
		\begin{lemma}[Lipschitz continuity of the solution mapping]\label{Lipschitz_continuity_of_the_solution_mapping}
			Under Assumption \ref{assumption_1} the solution map 
			$\R^d \ni
			x \longmapsto y^*(x)=\operatorname{argmax}_{y \in \R^n} \bigl\{ \Phi(x,y) - h(y) \bigr\} \in \R^N$
			is single-valued and it is $L_{yx}/\mu$-Lipschitz.
		\end{lemma}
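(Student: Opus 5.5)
The proof has two parts. First, for each $x$ the maximizer exists and is unique. Second, the optimality conditions at two points $x_1,x_2$, combined through strong monotonicity, give the Lipschitz bound. Throughout, write $F_x(y) := \Phi(x,y) - h(y)$.

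\textbf{Existence and uniqueness.} By Assumption~\ref{assumption_1}(ii)--(iii), $\Phi(x,\cdot)$ is concave and differentiable and $-h$ is concave, proper and u.s.c. One of the two is $\mu$-strongly concave, so $-F_x$ is proper, l.s.c. and $\mu$-strongly convex. A proper l.s.c. strongly convex function is coercive: it is bounded below by an affine function plus $\frac{\mu}{2}\|\cdot\|^2$, since it has an affine minorant. Hence it has a minimizer, and strict convexity makes that minimizer unique. So $y^*(x)$ is well defined for every $x$.

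\textbf{Optimality conditions and strong monotonicity.} Since $\Phi(x,\cdot)$ is differentiable and finite everywhere, the sum rule gives
\[
0 \in -\nabla_y\Phi(x,y^*(x)) + \partial h(y^*(x)).
\]
Set $y_i = y^*(x_i)$ and $g_i := \nabla_y\Phi(x_i,y_i) \in \partial h(y_i)$. The operator $y\mapsto -\nabla_y\Phi(x,y) + \partial h(y)$ is the subdifferential of the $\mu$-strongly convex function $-F_x$, so it is $\mu$-strongly monotone. This holds in both cases of the assumption: either $-\nabla_y\Phi(x,\cdot)$ is $\mu$-strongly monotone and $\partial h$ is monotone, or the reverse.

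\textbf{Comparing $x_1$ and $x_2$.} Fix $x = x_1$. The first optimality condition says $0$ belongs to this operator at $y_1$. At $y_2$ it contains the element
\[
-\nabla_y\Phi(x_1,y_2) + g_2 = \nabla_y\Phi(x_2,y_2) - \nabla_y\Phi(x_1,y_2).
\]
Strong monotonicity between $y_2$ and $y_1$ then gives
\[
\mu\|y_2-y_1\|^2 \le \langle \nabla_y\Phi(x_2,y_2)-\nabla_y\Phi(x_1,y_2),\, y_2-y_1\rangle \le L_{yx}\|x_2-x_1\|\,\|y_2-y_1\|,
\]
using Cauchy--Schwarz and Assumption~\ref{assumption_1}(i) with $y=y_2$ fixed. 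Dividing by $\|y_2-y_1\|$ (the case $y_1=y_2$ is trivial) yields $\|y^*(x_1)-y^*(x_2)\| \le \frac{L_{yx}}{\mu}\|x_1-x_2\|$.

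\textbf{Main obstacle.} The delicate point is justifying the subdifferential sum rule and the strong monotonicity of $\partial(-F_x)$ when $h$ is merely proper, l.s.c. and convex. It works because $\Phi(x,\cdot)$ is finite and differentiable everywhere, so the standard qualification holds, and because the subdifferential of a $\mu$-strongly convex function is $\mu$-strongly monotone. One can also bypass the sum rule by working directly with function values: add the strong-concavity inequalities $F_{x_i}(y_j) \le F_{x_i}(y_i) - \frac{\mu}{2}\|y_j-y_i\|^2$ for $i\ne j$. The $h$ terms then cancel, and the remaining $\Phi$ differences are handled by the mean value theorem along the segment from $y_1$ to $y_2$. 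That route gives the same constant $L_{yx}/\mu$.
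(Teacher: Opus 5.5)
Your proof is correct. The paper does not prove this lemma itself; it imports it from \cite[Lem.~1]{Cohen2025AlternatingandParallelProximalGradientMethods}. Your argument is therefore a self-contained substitute for that citation, and it follows the standard route: first-order optimality $\nabla_y\Phi(x,y^*(x))\in\partial h(y^*(x))$, $\mu$-strong monotonicity of $-\nabla_y\Phi(x,\cdot)+\partial h$, and the $L_{yx}$-Lipschitz continuity of $\nabla_y\Phi(\cdot,y)$. What your version adds over the citation is transparency about what is actually used. Both alternatives of Assumption~\ref{assumption_1}(ii) are handled at once, because only the strong monotonicity of the full operator matters. Nothing beyond Assumption~\ref{assumption_1} enters (no $L_{yy}$, no joint smoothness), which matches the hypotheses under which the lemma is stated. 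Your function-value alternative is also valid and avoids the sum rule altogether; there the $h$ terms cancel legitimately because $y^*(x_1),y^*(x_2)\in\operatorname{dom}h$. Two minor precision points. In the coercivity step, the affine minorant should be taken for $-F_x-\frac{\mu}{2}\|\cdot\|^2$ (proper, l.s.c., convex), not for $-F_x$ itself. And you use $L_{yx}$ uniformly in $y$ (at $y=y^*(x_2)$); this is the reading of Assumption~\ref{assumption_1}(i) that the statement requires, since as worded the constant could depend on $y$.
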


		Note that $\forall x \in \mathbb{R}^d$, $\varphi(x) = \Phi(x, y^*(x))-h(y^*(x)) = \max_{y \in \R^n} \left\{ \Phi(x, y)-h(y) \right\}$ thanks to the definition of $y^*$. Now, we establish that the mapping $\varphi$ is continuously differentiable ($C^1$). 
		To this end we make the next assumption:
		
		\begin{assumption}\label{assumption_2}
			\begin{enumerate}[label=(\roman*)]
				\item Let $x \in \R^d$ and $y \in \R^n$ there exist $L_{xx}, L_{xy}>0$ such that for all $x_1,x_2 \in \R^n$ and $y_1,y_2 \in \R^d$ we have
				\begin{enumerate}
					\item  $\norm{\nabla_x \Phi(x_1,y)-\nabla_x \Phi(x_2,y)} \leq L_{xx} \norm{x_1-x_2}$, 
					\item $\norm{\nabla_x \Phi(x,y_1)-\nabla_x \Phi(x,y_2)} \leq L_{xy} \norm{y_1-y_2}$.
				\end{enumerate}
				\item For all $y \in \R^n$, $\Phi(\cdot,y)$ is $\rho$-weakly convex.
				\item The function $\varphi$ defined in \eqref{introduction_eq_2} is lower bounded, i.e. $\inf_{x \in \mathbb{R}^d} \varphi(x) > -\infty$.
			\end{enumerate}
		\end{assumption}
		
		\begin{proposition}[Smoothness of $\varphi$]\label{Smoothness_of_the_max_function}
			Under assumptions \ref{assumption_1} and \ref{assumption_2} we have that $\varphi$ is $\rho-$weakly convex and of class $C^1$ with gradient given by $\nabla \varphi(x) = \nabla_x \Phi(x, y^*(x))$.
			Furthermore, its gradient is $L_{\varphi}:=L_{xx} + (L_{xy}L_{yx})/\mu$-Lipschitz continuous.
		\end{proposition}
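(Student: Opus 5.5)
The proof splits into three parts: weak convexity of $\varphi$, differentiability with a Danskin-type gradient formula, and the Lipschitz bound on $\nabla\varphi$. Throughout, write $\psi(x,y):=\Phi(x,y)-h(y)$, so that $\varphi(x)=\sup_y \psi(x,y)=\psi(x,y^*(x))$, where $y^*$ is the single-valued, $L_{yx}/\mu$-Lipschitz map of Lemma~\ref{Lipschitz_continuity_of_the_solution_mapping}.

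\textbf{Weak convexity.} By Assumption~\ref{assumption_2}(ii), for every fixed $y\in\operatorname{dom}h$ the map $x\mapsto \psi(x,y)+\frac{\rho}{2}\norm{x}^2$ is convex, since $-h(y)$ is a constant in $x$. Hence
\[
\varphi(x)+\tfrac{\rho}{2}\norm{x}^2=\sup_{y}\bigl\{\psi(x,y)+\tfrac{\rho}{2}\norm{x}^2\bigr\}
\]
is a pointwise supremum of convex functions, and it is finite everywhere because the maximum is attained at $y^*(x)$. It is therefore convex, i.e.\ $\varphi$ is $\rho$-weakly convex.

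\textbf{Differentiability and the gradient formula.} Fix $x$ and a direction $u$, and set $g(x):=\nabla_x\Phi(x,y^*(x))$. I will sandwich the increment of $\varphi$ between two one-sided estimates, each coming from the Descent lemma (Lemma~\ref{Descen_lemma}) applied to $\Phi(\cdot,y)$, which is $L_{xx}$-smooth by Assumption~\ref{assumption_2}(i)(a).

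For the lower bound, use the optimality of $y^*(x+u)$ over $y^*(x)$:
\[
\varphi(x+u)\ge \psi(x+u,y^*(x))\ge \varphi(x)+\langle g(x),u\rangle-\tfrac{L_{xx}}{2}\norm{u}^2 .
\]
For the upper bound, compare instead at the point $y^*(x+u)$:
\[
\varphi(x+u)-\varphi(x)\le \psi(x+u,y^*(x+u))-\psi(x,y^*(x+u))\le \langle \nabla_x\Phi(x,y^*(x+u)),u\rangle+\tfrac{L_{xx}}{2}\norm{u}^2 .
\]
Assumption~\ref{assumption_2}(i)(b) together with the Lipschitz continuity of $y^*$ gives
\[
\norm{\nabla_x\Phi(x,y^*(x+u))-g(x)}\le \frac{L_{xy}L_{yx}}{\mu}\norm{u}.
\]
Combining the two bounds,
\[
\bigl|\varphi(x+u)-\varphi(x)-\langle g(x),u\rangle\bigr|\le C\norm{u}^2, \qquad C:=\tfrac{L_{xx}}{2}+\tfrac{L_{xy}L_{yx}}{\mu},
\]
which is $o(\norm{u})$. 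So $\varphi$ is Fréchet differentiable with $\nabla\varphi(x)=\nabla_x\Phi(x,y^*(x))$.

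\textbf{Lipschitz gradient.} Using the formula just obtained, insert the intermediate point $(x_2,y^*(x_1))$:
\[
\norm{\nabla\varphi(x_1)-\nabla\varphi(x_2)}\le \norm{\nabla_x\Phi(x_1,y^*(x_1))-\nabla_x\Phi(x_2,y^*(x_1))}+\norm{\nabla_x\Phi(x_2,y^*(x_1))-\nabla_x\Phi(x_2,y^*(x_2))}.
\]
The first term is at most $L_{xx}\norm{x_1-x_2}$ by Assumption~\ref{assumption_2}(i)(a). The second is at most $L_{xy}\norm{y^*(x_1)-y^*(x_2)}\le (L_{xy}L_{yx}/\mu)\norm{x_1-x_2}$ by Assumption~\ref{assumption_2}(i)(b) and Lemma~\ref{Lipschitz_continuity_of_the_solution_mapping}. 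This yields the constant $L_\varphi$ and, in particular, that $\varphi$ is $C^1$.

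The only delicate step is the differentiability. A nonsmooth $h$ rules out differentiating $y^*$, and the upper bound evaluates the gradient at the moving point $y^*(x+u)$. Both issues are handled by the two-sided comparison above, which needs only the Lipschitz continuity of $y^*$ and never its derivative.
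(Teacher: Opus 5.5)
Your proof is correct, and it takes a more self-contained route than the paper. The paper proves only the weak convexity directly, using the same argument you give: a pointwise supremum over $y$ of the convex functions $\Phi(\cdot,y)-h(y)+\frac{\rho}{2}\norm{\cdot}^2$. It cites the rest from \cite{Cohen2025AlternatingandParallelProximalGradientMethods}: the $C^1$ property and the formula $\nabla\varphi(x)=\nabla_x\Phi(x,y^*(x))$ come from their Proposition~1, and the $L_\varphi$-Lipschitz bound from their Lemma~2. Your Lipschitz step, the triangle inequality through the intermediate point $(x_2,y^*(x_1))$, is the standard argument behind such a lemma.

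The real difference is the differentiability. Instead of invoking a Danskin-type theorem, you bound $\varphi(x+u)-\varphi(x)-\langle g(x),u\rangle$ below by $-\frac{L_{xx}}{2}\norm{u}^2$ and above by $\bigl(\frac{L_{xx}}{2}+\frac{L_{xy}L_{yx}}{\mu}\bigr)\norm{u}^2$. This uses only the descent lemma for $\Phi(\cdot,y)$, the optimality of $y^*(x)$ and $y^*(x+u)$, and Lemma~\ref{Lipschitz_continuity_of_the_solution_mapping}.

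What your route gains: no compactness of the $y$-domain, which the classical Danskin theorem needs and which fails here since $y\in\R^n$ and $h$ may be extended-valued. It also needs no differentiability of $y^*$ or smoothness of $h$, and it gives an explicit quadratic remainder. The paper's route is shorter, but the reader must check that the cited results' hypotheses match Assumptions~\ref{assumption_1}--\ref{assumption_2}.
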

		\begin{proof}
			The regularity part ($C^1$) of this result follows from \cite[Prop. 1, pp. 146]{Cohen2025AlternatingandParallelProximalGradientMethods}  while the Lipschitzness of $\nabla \varphi$ is given by \cite[Lem. 2, pp. 147]{Cohen2025AlternatingandParallelProximalGradientMethods}. The $\rho-$weakly convexity follows directly using Assumption~\ref{assumption_2}-$(ii)$ and from the fact that supremum preserves the triangular inequality.
		\end{proof}
		
		To end this section, let us introduce a sufficient condition for a sequence $(w_k)_{k \geq 0}$ to achieve an $\varepsilon$-stationary point (Definition~\ref{def_epsilon_stationary}) for the mapping  $\varphi$ defined in~\eqref{introduction_eq_2}.
		
		\begin{proposition}\label{sufficient_condition_epsilon_stationary_point}
			Let $N \geq 1$, $(w_k)_{k=0}^N \subset \R^d$, and Assumption~\ref{assumption_2}-(iii) hold true. Assume that the mapping $\varphi$ satisfies
			\begin{equation}
				\varphi(w_N) \leq \varphi(w_0) - C_1 \sum_{k=0}^{N-1}\norm{\nabla \varphi(w_k)}^2 + C_2  \quad \text{where} \ C_1, C_2 \in \R.
			\end{equation}
			If $C_1,C_2 >0$, then for any $\epsilon>0$ there exists $k=\mathcal{O}(\epsilon^{-2})$ such that $\norm{\nabla \varphi(w_k)}<\epsilon$.
		\end{proposition}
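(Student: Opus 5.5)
The plan is the standard telescoping/averaging argument. First rearrange the assumed inequality to isolate the sum of squared gradient norms:
\begin{equation*}
C_1 \sum_{k=0}^{N-1}\norm{\nabla \varphi(w_k)}^2 \leq \varphi(w_0) - \varphi(w_N) + C_2 .
\end{equation*}
Assumption~\ref{assumption_2}-(iii) gives $\varphi^\ast := \inf_{x} \varphi(x) > -\infty$. Hence $-\varphi(w_N) \leq -\varphi^\ast$, and the right-hand side is bounded by the constant $\Delta := \varphi(w_0) - \varphi^\ast + C_2$. This constant is finite, and strictly positive since $C_2>0$ and $\varphi(w_0)\ge\varphi^\ast$. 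Dividing by $C_1 N>0$ yields
\begin{equation*}
\min_{0\le k\le N-1}\norm{\nabla \varphi(w_k)}^2 \leq \frac{1}{N}\sum_{k=0}^{N-1}\norm{\nabla \varphi(w_k)}^2 \leq \frac{\Delta}{C_1 N}.
\end{equation*}

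Next, fix $\epsilon>0$ and choose $N > \Delta/(C_1\epsilon^2)$, i.e. $N = \lfloor \Delta/(C_1 \epsilon^2)\rfloor + 1$. Then the minimum above is strictly less than $\epsilon^2$. So some index $k\le N-1$ satisfies $\norm{\nabla\varphi(w_k)}<\epsilon$, and this index is $\mathcal{O}(\epsilon^{-2})$, with hidden constant $\Delta/C_1$.

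The only delicate point is interpretive rather than technical. For the $\mathcal{O}(\epsilon^{-2})$ conclusion to be meaningful, $C_1$ and $C_2$ (and $w_0$) must not depend on $N$, or at least $C_2$ must stay bounded uniformly in $N$. I would read the statement as applying to a sequence $(w_k)_{k\ge0}$ for which the inequality holds for every $N$ with fixed constants, and I would state this explicitly. Otherwise one cannot choose $N$ after fixing $\epsilon$. With that reading, the argument is complete.
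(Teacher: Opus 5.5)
Your proof is correct and is essentially the paper's argument: bound $\varphi(w_0)-\varphi(w_N)$ using $\inf\varphi>-\infty$, average to get $\min_{k}\norm{\nabla\varphi(w_k)}^2 \lesssim 1/N$, then take $N$ of order $\epsilon^{-2}$. The only difference is how strictness is obtained: the paper uses an auxiliary constant $C_3>\varphi(w_0)-\inf\varphi$ together with $N=\lceil C/\epsilon^2\rceil$, while you take $N>\Delta/(C_1\epsilon^2)$. Your caveat that $C_1,C_2$ must not depend on $N$ is exactly the reading the paper uses implicitly when it writes ``for any $N$''.
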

		\begin{proof}
			By Assumption~\ref{assumption_3}-(ii) we have that $\varphi(w_0) - \varphi(w_N) < \varphi(w_0) - \inf_w \varphi(w) < +\infty$, \emph{i.e} there exists $C_3 \in \R$ such that $\varphi(w_0) - \inf_w \varphi(w) < C_3$. Using this fact we deduce that
			\begin{equation*}
				\min_{0 \leq k \leq N-1} \norm{\nabla \varphi(w_k)}^2 \leq  \frac{1}{N}\sum_{k=0}^{N-1}\norm{\nabla \varphi(w_k)}^2 < \frac{C}{N},
			\end{equation*}
			where $C = (C_3 + C_2)/C_1>0$. Thus for any $N$ there exists $ 0 \leq k < N$ such that $\norm{\nabla \varphi(w_{k})} < \sqrt{C/N}$. Given $\epsilon>0$ we choose $N = \lceil C/\epsilon^2 \rceil$ so $k\leq N = \mathcal{O}(\epsilon^{-2}$) and we have that $\norm{\nabla \varphi(w_{k})} < \epsilon$.
		\end{proof}

		\section{Convergence analysis for \algoone}\label{section_convergence_analysis_for_algorithm_1}
		We recall that \algoone is defined $\forall k\geq 0$ by
		\begin{align}
				x_{k+1} &= x_k - \eta_x \nabla_x \Phi(x_k,y_k) \label{algo_1_update_x_sect_3}\\
				y_{k+1} &= \prox{\eta_y h}\left(y_k + \eta_y \nabla_y \Phi(x_{k+1}, y_k)\right). \label{algorithm_dproxa_b_sect_3}
		\end{align}
		In this section, we prove the convergence of \algoone towards an  $\varepsilon$-stationary  point (Definition~\ref{def_epsilon_stationary}) of the mapping $\varphi$ defined in~\eqref{introduction_eq_2}. To this end, we make the following assumption.
		
		\begin{assumption}\label{assumption_3}
			\begin{enumerate}[label=(\roman*)]
				\item Let $x \in \R^d$ there exists $L_{yy}>0$ such that for all $y_1,y_2 \in \R^n$ we have  $$\norm{\nabla_y \Phi(x,y_1)-\nabla_y \Phi(x,y_2)} \leq L_{yy} \norm{y_1-y_2}.$$
				\item The condition number satisfies $\kappa_y := L_{yy} / \mu \geq 1$.
			\end{enumerate}
		\end{assumption}
		\begin{remark}
			Note that when the strong concavity originates from the coupling term $\Phi$, we have $\kappa_y \geq 1$. In contrast, when it originates from the regularizer $-h$, this is no longer necessarily true. However, it is always possible to choose $L_{yy}$ sufficiently large so as to ensure that $\kappa_y \geq 1$.
		\end{remark}
		
		Let us now state the main result of this section.
		
		\begin{theorem}\label{thm_non_convex_strongly_concave_case_1}
			Let Assumptions~\ref{assumption_1}, \ref{assumption_2}, and~\ref{assumption_3} hold true. For any initial point $(x_0,y_0) \in \mathbb{R}^d \times \mathbb{R}^n$ we consider the sequence $(x_k,y_k)_{k \geq 0}$ generated by \algoone with step-sizes 
			\begin{equation}\label{constraint_eta_x_algo1}
				0<\eta_y \leq \frac{1}{L_{yy}} \quad \text{and} \quad 0<\eta_x < \frac{\eta_yL_{yy}\mu}{2\kappa_yL_{xy}L_{yx}}.
			\end{equation}
			Then, for every $\varepsilon >0$ there exists $k = \mathcal{O}  \left(\varepsilon^{-2}\right)$ such that $\norm{\nabla \varphi(x_k)} < \varepsilon$.
		\end{theorem}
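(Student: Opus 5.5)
The plan is to build a Lyapunov-type descent inequality that fits Proposition~\ref{sufficient_condition_epsilon_stationary_point} with $w_k=x_k$. Write $\delta_k:=\norm{y_k-y^*(x_k)}^2$ for the tracking error of the ascent variable. The argument has three steps.

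\textbf{Step 1: one-step descent in $\varphi$.} Proposition~\ref{Smoothness_of_the_max_function} gives that $\varphi$ is $L_\varphi$-smooth. Applying Lemma~\ref{Descen_lemma} to $\varphi$ along the step $x_{k+1}-x_k=-\eta_x\nabla_x\Phi(x_k,y_k)$ yields
\begin{equation*}
\varphi(x_{k+1})\le \varphi(x_k)-\eta_x\langle\nabla\varphi(x_k),\nabla_x\Phi(x_k,y_k)\rangle+\tfrac{L_\varphi\eta_x^2}{2}\norm{\nabla_x\Phi(x_k,y_k)}^2 .
\end{equation*}
Recall $\nabla\varphi(x_k)=\nabla_x\Phi(x_k,y^*(x_k))$. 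Assumption~\ref{assumption_2}-(i)(b) then gives $\norm{\nabla_x\Phi(x_k,y_k)-\nabla\varphi(x_k)}\le L_{xy}\sqrt{\delta_k}$. Using the polarization identity $-\langle a,b\rangle=\tfrac12(\norm{a-b}^2-\norm a^2-\norm b^2)$, and $\eta_x L_\varphi\le 1$ (which the step-size condition should imply), we get
\begin{equation*}
\varphi(x_{k+1})\le\varphi(x_k)-\tfrac{\eta_x}{2}\norm{\nabla\varphi(x_k)}^2-\tfrac{\eta_x}{2}(1-\eta_xL_\varphi)\norm{\nabla_x\Phi(x_k,y_k)}^2+\tfrac{\eta_x L_{xy}^2}{2}\delta_k .
\end{equation*}

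\textbf{Step 2: contraction of the tracking error.} The $y$-update is a proximal-gradient step on the $\mu$-strongly concave problem $\max_y\Phi(x_{k+1},y)-h(y)$, with step $\eta_y\le 1/L_{yy}$, and $y^*(x_{k+1})$ is its fixed point. Rather than using nonexpansiveness of $\prox{}$ alone, which is the approach of earlier work, I would use the ascent lemma (Lemma~\ref{Descen_lemma}) in $y$ together with the prox optimality inequality and strong concavity, placed on $\Phi$ or on $-h$ as allowed by Assumption~\ref{assumption_1}. The target is the standard linear rate
\begin{equation*}
\norm{y_{k+1}-y^*(x_{k+1})}^2\le(1-\eta_y\mu)\norm{y_k-y^*(x_{k+1})}^2 .
\end{equation*}
Next, move the anchor from $y^*(x_{k+1})$ back to $y^*(x_k)$. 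Lemma~\ref{Lipschitz_continuity_of_the_solution_mapping} gives $\norm{y^*(x_{k+1})-y^*(x_k)}\le\frac{L_{yx}}{\mu}\eta_x\norm{\nabla_x\Phi(x_k,y_k)}$. Combined with Young's inequality with weight $\eta_y\mu/2$, this gives
\begin{equation*}
\delta_{k+1}\le(1-\tfrac{\eta_y\mu}{2})\delta_k+C\,\frac{\eta_x^2L_{yx}^2}{\eta_y\mu^3}\norm{\nabla_x\Phi(x_k,y_k)}^2 .
\end{equation*}

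\textbf{Step 3: potential and telescoping.} Define the potential $V_k=\varphi(x_k)+\lambda\delta_k$. The weight $\lambda$ is chosen so that the $\delta_k$ term from Step 1 is absorbed, i.e.\ $\lambda\eta_y\mu/2\ge\eta_xL_{xy}^2/2$. It is also needed that the positive $\norm{\nabla_x\Phi(x_k,y_k)}^2$ coefficient produced by Step 2 be dominated by $\tfrac{\eta_x}{2}(1-\eta_xL_\varphi)$. This yields $V_{k+1}\le V_k-\tfrac{\eta_x}{2}\norm{\nabla\varphi(x_k)}^2$. Telescoping and using $\delta_N\ge0$ gives the hypothesis of Proposition~\ref{sufficient_condition_epsilon_stationary_point} with $C_1=\eta_x/2$ and $C_2=\lambda\delta_0$. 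That proposition then delivers $k=\mathcal{O}(\varepsilon^{-2})$.

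\textbf{Main obstacle.} The hard part is checking that the two compatibility conditions on $\lambda$ can be met simultaneously exactly under the stated bound $\eta_x<\eta_yL_{yy}\mu/(2\kappa_yL_{xy}L_{yx})$. Crude Young splits in Step 2 would cost constant factors and give a worse range. So I would keep the recursion in Step 2 as sharp as possible: the paper hints that the key lemma avoids approximations, possibly by expanding $\norm{y_{k+1}-y^*(x_{k+1})}^2$ exactly instead of via Young. I would also tune $\lambda$, and the Young weight, to match the constants $\kappa_y$ and $L_{yy}\mu$ appearing in the condition. I would further check that $\eta_xL_\varphi\le 1$ follows from that condition, using $\kappa_y\ge1$ from Assumption~\ref{assumption_3}.
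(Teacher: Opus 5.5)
\textbf{There is a genuine gap, and it is the step you deferred.} The stated bound does \emph{not} imply $\eta_x L_\varphi\le 1$. With $\eta_y=\tau/L_{yy}$ and $\beta=\mu/(L_{xy}L_{yx})$, the hypothesis reads $\eta_x<\tau\beta/(2\kappa_y)$. This controls only the part $L_{xy}L_{yx}/\mu$ of $L_\varphi=L_{xx}+L_{xy}L_{yx}/\mu$. The constant $L_{xx}$ does not appear at all, and $\kappa_y\ge 1$ does not help. No sharper Step~2 or choice of $\lambda$ can fix this, because the statement as written is false. Take $d=n=1$, $h\equiv 0$ and $\Phi(x,y)=\frac{a}{2}x^2+xy-\frac12 y^2$ with $a=10$. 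All assumptions hold with $L_{xx}=a$, $L_{xy}=L_{yx}=L_{yy}=\mu=1$, $\kappa_y=1$, any $\rho>0$, and $\varphi(x)=\frac{a+1}{2}x^2$. The theorem allows $\eta_y=1$ and any $\eta_x<1/2$, say $\eta_x=0.4$. Then $y_{k+1}=x_{k+1}$, and from $x_0=y_0=1$ one gets $x_{k+1}=(1-\eta_x(a+1))x_k=-3.4\,x_k$. Hence $|\nabla\varphi(x_k)|=11\cdot 3.4^k$ never drops below any $\varepsilon<11$. In your Step~3 this appears as $\eta_xL_\varphi=4.4$: the term $-\frac{\eta_x}{2}(1-\eta_xL_\varphi)\norm{\nabla_x\Phi(x_k,y_k)}^2$, which was supposed to absorb the drift of $y^*$, has the wrong sign.

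\textbf{How the paper argues, and what is missing.} The paper does not use a potential. It unrolls the recursion of Lemma~\ref{Algo1-lemma-3}, obtained by splitting $\norm{\nabla_x\Phi(x_k,y_k)}^2\le 2L_{xy}^2\delta_k+2\norm{\nabla\varphi(x_k)}^2$, and substitutes it into the summed descent inequality of Lemma~\ref{Algo1-lemma-1}. It seems to need no condition on $L_\varphi$ only because of a sign slip in the last step. In fact $\bigl(\gamma-1+\frac{\tau}{2\kappa_y}\bigr)\frac{1}{1-\gamma}=\frac{2\kappa_y\eta_x^2}{\tau\beta^2(1-\gamma)}=-1+\frac{\tau}{2\kappa_y(1-\gamma)}>0$, not $-\bigl(1+\frac{\tau}{2\kappa_y(1-\gamma)}\bigr)$. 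Once corrected, that route requires $(1-2L_\varphi\eta_x)\frac{\tau}{2\kappa_y}>\frac{4\kappa_y\eta_x^2}{\tau\beta^2}$, and in particular $\eta_x<1/(2L_\varphi)$. So both your plan and the statement lack an explicit hypothesis of the type $\eta_x\lesssim 1/L_\varphi$. Once you add it, your potential argument works.

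\textbf{Completing your argument under that hypothesis.} For Step~2, use Lemma~\ref{Algo1-lemma-2} rather than your target factor $1-\eta_y\mu$. That target can fail when the strong concavity comes from $-h$: for $\Phi$ independent of $y$ and $h=\frac{\mu}{2}\norm{\cdot}^2$, the exact factor is $(1+\eta_y\mu)^{-2}$. Lemma~\ref{Algo1-lemma-2} gives $\delta_{k+1}\le(1-\frac{\eta_y\mu}{2})\delta_k+\frac{L_{yx}^2\eta_x^2}{\eta_y\mu^3}\norm{\nabla_x\Phi(x_k,y_k)}^2$ in both cases. Taking $\lambda=\eta_xL_{xy}^2/(\eta_y\mu)$, you obtain $V_{k+1}\le V_k-\frac{\eta_x}{2}\norm{\nabla\varphi(x_k)}^2$ whenever $\frac{2\kappa_y^2\eta_x^2}{\tau^2\beta^2}+\eta_xL_\varphi\le 1$. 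You keep the negative $\norm{\nabla_x\Phi(x_k,y_k)}^2$ term instead of splitting it, so this condition is less restrictive than the corrected one from the paper's lemmas. Still, no argument can recover the range claimed in the theorem, since that range ignores $L_{xx}$.
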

		
		From now on and throughout this work, the step-size $\eta_y$ is parameterized as $\eta_y = \tau / L_{yy}$ for some $\tau \in (0,1]$. Consequently, the sequence $(x_k, y_k)_{k \geq 1}$ generated by \algoone depends on $\tau$. However, without loss of generality, we simply write $(x_k, y_k)_{k \geq 1}$ to avoid explicitly including $\tau$ in the notation.

		To prove Theorem~\ref{thm_non_convex_strongly_concave_case_1}, we first provide key results that form the basis of our analysis of \algoone.  We recall that $\varphi(x) = \max_{y \in \R^n} \ \left( \Phi(x,y)-h(y) \right)$ and that under Assumption~\ref{assumption_1} we write $y^*(x) = \operatorname{argmax}_{y \in \R^n}\left( \Phi(x,y)-h(y)\right)$.
		
		\begin{lemma}\label{Algo1-lemma-1}
			Let Assumptions \ref{assumption_1} and \ref{assumption_2} hold true and $(x_k,y_k)_{k \geq 1}$ be a sequence generated by \algoone. Then for any $\eta_x >0$ we have
			\begin{equation}\label{Algo1-lemma-1_main_eq}
				\varphi(x_{N}) \leq \varphi(x_0) - \frac{\eta_x}{2}\left( 1 - 2\lc\eta_x \right) \sum_{k=0}^{N-1}\norm{\nabla \varphi(x_k)}^2 + \frac{\eta_x}{2}\left( 1 + 2\lc \eta_x \right) L_{xy}^2 \sum_{k=0}^{N-1} \norm{y^*(x_k)-y_k}^2. 
			\end{equation}
		\end{lemma}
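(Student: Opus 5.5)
We apply the descent lemma (Lemma~\ref{Descen_lemma}) to $\varphi$, which is $\lc$-smooth by Proposition~\ref{Smoothness_of_the_max_function}, along one $x$-step of \algoone, then telescope over $k=0,\dots,N-1$. Write $g_k := \nabla_x\Phi(x_k,y_k)$, so that $x_{k+1}-x_k=-\eta_x g_k$. Lemma~\ref{Descen_lemma} gives
\begin{equation*}
\varphi(x_{k+1}) \le \varphi(x_k) - \eta_x\langle \nabla\varphi(x_k), g_k\rangle + \frac{\lc\eta_x^2}{2}\norm{g_k}^2 .
\end{equation*}
Next we relate $g_k$ to $\nabla\varphi(x_k)=\nabla_x\Phi(x_k,y^*(x_k))$ by introducing the error $e_k := g_k-\nabla\varphi(x_k)$. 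Assumption~\ref{assumption_2}-(i)(b) gives $\norm{e_k}\le L_{xy}\norm{y_k-y^*(x_k)}$. This is the only place the coupling of $x$ and $y$ enters, so no bound on $\norm{y_k - y^*(x_k)}$ is needed at this stage.

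We then expand both terms with $g_k=\nabla\varphi(x_k)+e_k$. For the inner product we use the polarization identity
\begin{equation*}
-\langle a, a+e\rangle = -\tfrac12\norm{a}^2 - \tfrac12\norm{a+e}^2 + \tfrac12\norm{e}^2 .
\end{equation*}
Alternatively, Young's inequality gives $-\langle a,a+e\rangle\le -\tfrac12\norm{a}^2+\tfrac12\norm{e}^2$. For the quadratic term we use $\norm{a+e}^2\le 2\norm{a}^2+2\norm{e}^2$. Combining,
\begin{equation*}
\varphi(x_{k+1}) \le \varphi(x_k) - \frac{\eta_x}{2}\norm{\nabla\varphi(x_k)}^2 + \frac{\eta_x}{2}\norm{e_k}^2 + \lc\eta_x^2\norm{\nabla\varphi(x_k)}^2 + \lc\eta_x^2\norm{e_k}^2 .
\end{equation*}
This is exactly $-\frac{\eta_x}{2}(1-2\lc\eta_x)\norm{\nabla\varphi(x_k)}^2+\frac{\eta_x}{2}(1+2\lc\eta_x)\norm{e_k}^2$, which reproduces the constants in \eqref{Algo1-lemma-1_main_eq}. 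Inserting the bound $\norm{e_k}^2\le L_{xy}^2\norm{y^*(x_k)-y_k}^2$ and summing from $k=0$ to $N-1$ telescopes the left side to $\varphi(x_N)-\varphi(x_0)$, which gives the claim.

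No step is a genuine obstacle. The only care needed is choosing the splitting so that the constants come out exactly as $(1\mp 2\lc\eta_x)$: use the Young/polarization bound on the inner product and the factor-2 bound on the square. The lemma holds for all $\eta_x>0$ because no sign condition on $1-2\lc\eta_x$ is used; positivity is only required later, when the lemma is fed into Proposition~\ref{sufficient_condition_epsilon_stationary_point}.
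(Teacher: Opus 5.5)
Your proposal is correct and follows the paper's proof essentially step by step. Both apply the descent lemma to the $\lc$-smooth $\varphi$, split $\nabla_x\Phi(x_k,y_k)$ around $\nabla\varphi(x_k)$, use Young's inequality on the cross term and $\norm{a+e}^2\le 2\norm{a}^2+2\norm{e}^2$ on the quadratic term, bound the error by $L_{xy}^2\norm{y^*(x_k)-y_k}^2$ via Assumption~\ref{assumption_2}(i)(b), and telescope.
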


		The proof of this result is presented in Appendix~\ref{Proof_lemma_1_algo_1}. One of the main difficulties in the analysis of convergence for both algorithms is to control the gap $\norm{y^*(x_k)-y_k}^2$ (for all $k \in \N$) between the theoretical maximizer $y^*(x_k)$ of the inner problem at iteration $k$ in~\eqref{introduction_eq_1} and the current iterate\footnote{Since the proximal step to update variable $y$ is the same in both \algoone and \algotwo, the following result will be reused for the analysis of Algorithm 2.} $y_k$ given by~\eqref{algorithm_dproxa_b_sect_3}.
		
		\begin{notation}
			To ease the computations, throughout the paper, we denote
			$ y^*(x_k)$ by $y^*_k$,  $ \norm{y^*_k-y_k}^2$ by $\delta_k$, and $\beta = \mu /(L_{xy}L_{yx})$.
		\end{notation}
		
		\subsection{Control of \texorpdfstring{$\delta_k$}{delta k}}
		The aim of this section is to derive a bound for $\delta_k$. The key difference between our convergence analysis and those in~\cite{lin2025two,bot2023lternatingproximalgradientstepsstochastic} lies in the construction of this control. Our approach yields a less restrictive bound and allows us to explicitly characterize an admissible range for the step-size $\eta_x$, a step not employed in the aforementioned works.
		
		\begin{lemma}[General bound for $\delta_{k}$]\label{Algo1-lemma-2}
			Let Assumptions~\ref{assumption_1} and \ref{assumption_3} hold true and $(y_k)_{k \geq 1}$ be a sequence generated by ~\eqref{algorithm_dproxa_b_sect_3}. Then for every $k \in \N_0$ we have 
			\begin{equation}\label{result_of_lema_2_Algo_1}
				\delta_{k+1} \leq \left( 1 - \frac{\tau}{2\kappa_y} \right) \delta_k + \frac{\kappa_y}{\tau} \norm{ y_{k+1}^* - y_k^* }^2 \quad \forall \ 0 < \tau \leq 1.
			\end{equation}
		\end{lemma}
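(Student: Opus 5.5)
We split $\delta_{k+1}=\norm{y^*_{k+1}-y_{k+1}}^2$ with Young's inequality: for any $\gamma>0$,
\begin{equation*}
\delta_{k+1} \leq (1+\gamma)\norm{y^*(x_{k+1})-y_{k+1}}^2 + \Bigl(1+\tfrac{1}{\gamma}\Bigr)\norm{y^*_{k+1}-y^*_k}^2,
\end{equation*}
where the first term is to be understood relative to the previous maximizer. Precisely, we first bound $\norm{y_{k+1}-y^*(x_{k+1})}^2$ in terms of $\norm{y_k-y^*(x_{k+1})}^2$, i.e.\ one proximal-gradient step on the fixed function $\psi:=-\Phi(x_{k+1},\cdot)+h$. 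The natural ordering is therefore $\norm{y_k - y^*_{k+1}}^2 \le (1+\gamma)\delta_k + (1+1/\gamma)\norm{y^*_{k+1}-y^*_k}^2$, followed by the contraction of the step.

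\textbf{Contraction step.} For fixed $x=x_{k+1}$, the update $y_{k+1}=\prox{\eta_y h}(y_k+\eta_y\nabla_y\Phi(x,y_k))$ is a forward--backward step on $\psi=f+h$ with $f=-\Phi(x,\cdot)$ convex and $L_{yy}$-smooth (Assumption~\ref{assumption_3}). The step is $\eta_y=\tau/L_{yy}\le 1/L_{yy}$, and $\psi$ is $\mu$-strongly convex, with the strong convexity carried by either $f$ or $h$ (Assumption~\ref{assumption_1}(ii)). Its minimizer $y^*_{k+1}$ is a fixed point of this map.

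We use the standard forward--backward inequality. From the prox optimality condition, the descent lemma (Lemma~\ref{Descen_lemma}) applied to $f$, and convexity/strong convexity, we get for all $z$
\begin{equation*}
\psi(y_{k+1}) + \frac{1}{2\eta_y}\norm{y_{k+1}-z}^2 \le \psi(z) + \frac{1}{2\eta_y}\norm{y_k-z}^2 - \frac{\mu_f}{2}\norm{y_k-z}^2 - \frac{\mu_h}{2}\norm{y_{k+1}-z}^2,
\end{equation*}
where $\mu_f+\mu_h\ge\mu$ and only one of the two is guaranteed positive. Taking $z=y^*_{k+1}$ and using $\psi(y_{k+1})\ge\psi(y^*_{k+1})$:
\begin{itemize}[label={}]
\item if $\mu_f=\mu$, then $\norm{y_{k+1}-y^*_{k+1}}^2\le(1-\eta_y\mu)\norm{y_k-y^*_{k+1}}^2$;
\item if $\mu_h=\mu$, then $\norm{y_{k+1}-y^*_{k+1}}^2\le(1+\eta_y\mu)^{-1}\norm{y_k-y^*_{k+1}}^2$.
\end{itemize}
Since $\eta_y\mu=\tau/\kappa_y\le1$, in both cases the factor is at most $1-\tau/(2\kappa_y)$. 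Write $q:=1-\tau/(2\kappa_y)$.

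\textbf{Choice of $\gamma$.} Combining the two steps gives
\begin{equation*}
\delta_{k+1}\le q(1+\gamma)\delta_k + q(1+1/\gamma)\norm{y^*_{k+1}-y^*_k}^2.
\end{equation*}
We now need $q(1+\gamma)\le 1-\tau/(2\kappa_y)$ and $q(1+1/\gamma)\le\kappa_y/\tau$. Here the constant bookkeeping is the main obstacle: the stated coefficient $1-\tau/(2\kappa_y)$ suggests the contraction should actually be $1-\tau/\kappa_y$ in the first case, halved by the Young split. So we would sharpen the contraction to $q'=1-\tau/\kappa_y$ in the strongly concave-$\Phi$ case, and to $(1+\tau/\kappa_y)^{-1}$ in the $h$ case. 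Then we take $\gamma=\tau/(2\kappa_y)$, which gives:
\begin{itemize}[label={}]
\item $(1-\tau/\kappa_y)(1+\tau/(2\kappa_y))\le 1-\tau/(2\kappa_y)$;
\item $(1-\tau/\kappa_y)(1+2\kappa_y/\tau)\le 2\kappa_y/\tau-1+\dots$
\end{itemize}
The second bound is about $2\kappa_y/\tau$, which is too large by a factor of 2. A better choice is $\gamma=\tau/(2\kappa_y-2\tau)$, or more simply solving $q'(1+\gamma)=1-\tau/(2\kappa_y)$ exactly, which yields $1+1/\gamma$ of order $\kappa_y/\tau$ after multiplying by $q'$. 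We would check that $q'(1+1/\gamma)\le\kappa_y/\tau$ holds using $\tau\le1\le\kappa_y$ (Assumption~\ref{assumption_3}(ii)). The $h$-case is handled identically, since $(1+t)^{-1}\le 1-t/2$ for $t\le1$ and, if needed, the same exact solving for $\gamma$ applies.
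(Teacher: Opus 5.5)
\textbf{There is a genuine gap: the contraction factors you derive are too weak for the stated constants, and no choice of $\gamma$ can compensate.} Your forward--backward inequality is correct. However, with $z=y^*_{k+1}$ and only $\psi(y_{k+1})\ge\psi(y^*_{k+1})$, it gives $\delta_{k+1}\le c\,\norm{y_k-y^*_{k+1}}^2$ with $c=1-\tau/\kappa_y$ (strong concavity in $\Phi$) or $c=\kappa_y/(\kappa_y+\tau)$ (strong convexity in $h$). Your ``sharpened'' $q'$ is exactly these factors, so nothing is actually sharpened.

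Write $A:=1-\tau/(2\kappa_y)$ and $B:=\kappa_y/\tau$. A split $\delta_{k+1}\le c(1+\gamma)\delta_k+c(1+1/\gamma)\norm{y^*_{k+1}-y^*_k}^2$ reaches both coefficients for some $\gamma>0$ if and only if $c(A+B)\le AB$, i.e.\ $1/c\ge 1/A+1/B=\frac{2\kappa_y}{2\kappa_y-\tau}+\frac{\tau}{\kappa_y}>1+\frac{3\tau}{2\kappa_y}$.
\begin{itemize}
\item In the $h$-case, $1/c=1+\tau/\kappa_y$, so this fails for every $\tau,\kappa_y$. Concretely, $B$ forces $\gamma\ge\tau/\kappa_y$, while $A$ forces $\gamma\le\frac{\tau}{2\kappa_y}(1-\frac{\tau}{\kappa_y})$. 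Your claim that this case is ``handled identically'' is therefore wrong.
\item In the $\Phi$-case, the condition holds only if $\kappa_y^2-3\kappa_y\tau+\tau^2\le 0$, i.e.\ $\kappa_y/\tau\le(3+\sqrt5)/2$. For example, with $\tau=1$ and $\kappa_y=10$, $A$ forces $\gamma\le 1/18$, and then $c(1+1/\gamma)\ge 17.1>10$.
\end{itemize}
So the check you defer, $q'(1+1/\gamma)\le\kappa_y/\tau$, is false in general. The factor-$2$ loss you noticed is structural, not a matter of tuning. Your heuristic ``contraction $1-\tau/\kappa_y$, halved by the split'' is backwards: you need $c$ below roughly $1-\tfrac32\tau/\kappa_y$.

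\textbf{The missing ingredient is quadratic growth of the $\mu$-strongly convex $\psi$ at its minimizer:} $\psi(y_{k+1})\ge\psi(y^*_{k+1})+\frac{\mu}{2}\delta_{k+1}$, which holds whichever of $-\Phi(x_{k+1},\cdot)$ or $h$ carries the strong convexity.
\begin{itemize}
\item The paper uses exactly this in its first case (strong concavity of $\Phi(x_{k+1},\cdot)$ together with $\nabla_y\Phi(x_{k+1},y^*_{k+1})\in\partial h(y^*_{k+1})$), obtaining $\delta_{k+1}\le\frac{\kappa_y-\tau}{\kappa_y+\tau}\norm{y^*_{k+1}-y_k}^2$.
\item In the second case it uses that $\prox{\eta_y h}$ is $(1+\eta_y\mu)^{-1}$-Lipschitz and the forward step is nonexpansive, then \emph{squares} this, obtaining the factor $(\kappa_y/(\kappa_y+\tau))^2$.
\end{itemize}
Both factors are at most $c=(\kappa_y/(\kappa_y+\tau))^2$. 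The choice $\theta^*=\tau(3\kappa_y^2-\tau^2)/(2\kappa_y^3)$ then gives $c(1+\theta^*)=1-\tau/(2\kappa_y)$ and $c(1+1/\theta^*)=\kappa_y^2(2\kappa_y-\tau)/(\tau(3\kappa_y^2-\tau^2))\le\kappa_y/\tau$.

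If you want to keep your unified forward--backward route, insert quadratic growth into your inequality. This gives, in both cases,
\[
\delta_{k+1}\le\frac{1-\eta_y\mu_f}{1+\eta_y\mu_h+\eta_y\mu}\norm{y_k-y^*_{k+1}}^2\le\frac{\kappa_y}{\kappa_y+2\tau}\norm{y_k-y^*_{k+1}}^2 .
\]
Then $\gamma=\tau(3\kappa_y-2\tau)/(2\kappa_y^2)$ gives coefficient exactly $1-\tau/(2\kappa_y)$ on $\delta_k$, and $\frac{\kappa_y(2\kappa_y-\tau)}{\tau(3\kappa_y-2\tau)}\le\frac{\kappa_y}{\tau}$ on $\norm{y^*_{k+1}-y^*_k}^2$ (using $\tau\le\kappa_y$), which would complete your argument.
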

		
		We now refine it by using the update rule of $x_k$ in \algoone.
		
		\begin{lemma}[Bound of $\delta_{k}$ for \algoone]\label{Algo1-lemma-3}
			Let Assumptions~\ref{assumption_1}, \ref{assumption_2} and \ref{assumption_3} hold true and $(x_k,y_k)_{k \geq 1}$ be a sequence generated by \algoone. Then for every $k \in \N_0$  we have
			\begin{equation*}
				\delta_k \leq \gamma^k \delta_0 + \left(\gamma -1 +\frac{\tau}{2\kappa_y} \right)\frac{1}{L_{xy}^2} \sum_{j=0}^{k-1} \gamma^{k-1-j}\norm{\nabla \varphi(x_j)}^2 \quad \forall \ 0 < \tau \leq 1,
			\end{equation*}
			where $\gamma = 1 - \tau/2\kappa_y + 2\kappa_y\eta_x^2/(\tau\beta^2)$. Furthermore, if $\eta_x< \tau\beta/(2\kappa_y)$ then $|\gamma|<1$.
		\end{lemma}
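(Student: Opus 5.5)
The plan is to feed the $x$-update of \algoone into the general recursion of Lemma~\ref{Algo1-lemma-2}, which turns it into a one-step linear recursion in $\delta_k$, and then unroll that recursion.

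\textbf{Step 1: bound the drift of the maximizer.} By Lemma~\ref{Lipschitz_continuity_of_the_solution_mapping}, $y^*$ is $L_{yx}/\mu$-Lipschitz. Combined with the update \eqref{algo_1_update_x_sect_3}, this gives
\[
\norm{y^*_{k+1}-y^*_k}^2 \le \frac{L_{yx}^2}{\mu^2}\norm{x_{k+1}-x_k}^2 = \frac{L_{yx}^2\eta_x^2}{\mu^2}\norm{\nabla_x\Phi(x_k,y_k)}^2 .
\]

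\textbf{Step 2: replace the partial gradient by $\nabla\varphi$.} By Proposition~\ref{Smoothness_of_the_max_function}, $\nabla\varphi(x_k)=\nabla_x\Phi(x_k,y^*_k)$. Assumption~\ref{assumption_2}-(i)(b) then gives
\[
\norm{\nabla_x\Phi(x_k,y_k)}\le \norm{\nabla\varphi(x_k)}+L_{xy}\sqrt{\delta_k}.
\]
Using $(a+b)^2\le 2a^2+2b^2$, we get $\norm{\nabla_x\Phi(x_k,y_k)}^2\le 2\norm{\nabla\varphi(x_k)}^2+2L_{xy}^2\delta_k$. This is the only inexact step. It is also where the constants have to line up with the definition of $\gamma$, so I expect it to be the main point to check.

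\textbf{Step 3: obtain the one-step recursion.} Insert Steps 1 and 2 into \eqref{result_of_lema_2_Algo_1} and use $L_{yx}^2L_{xy}^2/\mu^2=1/\beta^2$. The coefficient of $\delta_k$ becomes
\[
1-\frac{\tau}{2\kappa_y}+\frac{2\kappa_y\eta_x^2}{\tau\beta^2}=\gamma .
\]
The coefficient of $\norm{\nabla\varphi(x_k)}^2$ becomes
\[
\frac{2\kappa_y\eta_x^2L_{yx}^2}{\tau\mu^2}=\frac{1}{L_{xy}^2}\cdot\frac{2\kappa_y\eta_x^2}{\tau\beta^2}=\frac{1}{L_{xy}^2}\left(\gamma-1+\frac{\tau}{2\kappa_y}\right).
\]
Hence, with $c:=\left(\gamma-1+\frac{\tau}{2\kappa_y}\right)/L_{xy}^2$, we have $\delta_{k+1}\le\gamma\delta_k+c\,\norm{\nabla\varphi(x_k)}^2$.

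\textbf{Step 4: unroll by induction.} We first check that $\gamma\ge 0$. Since $\kappa_y\ge1$ and $\tau\le1$, we have $1-\tau/(2\kappa_y)\ge 1/2$, and the remaining term of $\gamma$ is nonnegative, so $\gamma>0$. Multiplying the induction hypothesis by $\gamma$ therefore preserves the inequality. Induction on $k$ gives
\[
\delta_k\le\gamma^k\delta_0+c\sum_{j=0}^{k-1}\gamma^{k-1-j}\norm{\nabla\varphi(x_j)}^2 .
\]
The case $k=0$ is trivial.

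\textbf{Step 5: the condition $|\gamma|<1$.} Since $\gamma>0$, it suffices to have $\gamma<1$. This is equivalent to
\[
\frac{2\kappa_y\eta_x^2}{\tau\beta^2}<\frac{\tau}{2\kappa_y},
\qquad\text{i.e.}\qquad
\eta_x^2<\frac{\tau^2\beta^2}{4\kappa_y^2},
\]
which is exactly $\eta_x<\tau\beta/(2\kappa_y)$.
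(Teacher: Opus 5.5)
Your proposal is correct and follows the paper's proof essentially step for step: insert the $L_{yx}/\mu$-Lipschitz bound on $y^*$ and the explicit $x$-update into Lemma~\ref{Algo1-lemma-2}, split $\norm{\nabla_x\Phi(x_k,y_k)}^2\le 2\norm{\nabla\varphi(x_k)}^2+2L_{xy}^2\delta_k$, identify the coefficients through $\beta$, and unroll the resulting linear recursion. Your explicit check that $\gamma\ge 1/2>0$ is a small improvement in rigor, because it justifies multiplying the induction hypothesis by $\gamma$, whereas the paper only records $\gamma>-1/2$, which does not by itself do so.
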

		\begin{proof}
			Exploiting the $L_{yx}/\mu$-Lipschitz continuity of $y^*$ with respect to $x$ (see Lemma \ref{Lipschitz_continuity_of_the_solution_mapping}) and the $x$-update~\eqref{algo_1_update_x_sect_3} of \algoone relation~\eqref{result_of_lema_2_Algo_1} becomes
			\begin{equation*}
				\delta_{k+1} \leq \left(1 -\frac{\tau}{2\kappa_y} \right)\delta_k + \frac{\kappa_yL_{yx}^2\eta_x^2}{\tau \mu^2}\norm{\nabla_x \Phi(x_k,y_k)}^2.
			\end{equation*}
			Using Cauchy-Schwarz inequality on $\norm{\nabla_x \Phi(x_k,y_k)}^2 = \norm{\nabla_x \Phi(x_k,y_k) + \nabla \varphi(x_k)-\nabla \varphi(x_k)}^2$ we get
			\begin{equation}\label{Algo1-lemma-3-eq-1}
				\delta_{k+1} \leq \left( 1 - \frac{\tau}{2\kappa_y} \right) \delta_k + \frac{2\kappa_y L_{yx}^2 \eta_x^2}{\tau\mu^2} \norm{ \nabla \varphi(x_k) - \nabla_x \Phi(x_k, y_k) }^2 + \frac{2\kappa_yL_{yx}^2\eta_x^2}{\tau\mu^2} \norm{ \nabla \varphi(x_k) }^2.
			\end{equation}
			By Proposition~\ref{Smoothness_of_the_max_function} and Assumption~\ref{assumption_2}(i)-(b), $\norm{ \nabla \varphi(x_k) - \nabla_x \Phi(x_k, y_k) }^2 \leq L_{xy}^2 \norm{y^*_k -y_k}^2 = L_{xy}^2 \delta_k$, and~\eqref{Algo1-lemma-3-eq-1} becomes
			\begin{equation}
				\delta_{k+1} \leq \left( 1 - \frac{\tau}{2\kappa_y} + \frac{2\kappa_yL_{yx}^2L_{xy}^2\eta_x^2}{\tau\mu^2} \right)\delta_k + \frac{2\kappa_y L_{yx}^2\eta_x^2}{\tau \mu^2}\norm{\nabla \varphi(x_k)}^2. \notag 
			\end{equation}
			Using $\beta=\mu/(L_{xy}L_{yx})$ the last inequality writes
			\begin{equation}\label{Algo1-lemma-3-eq-2}
				\delta_{k+1} \leq \left( 1 - \frac{\tau}{2\kappa_y} + \frac{2\kappa_y\eta_x^2}{\tau\beta^2}  \right) \delta_k + \frac{2\kappa_y\eta_x^2}{\tau\beta^2L_{xy}^2} \norm{\nabla \varphi(x_k)}^2. 
			\end{equation}
			Denoting $ \gamma=1 - \tau/2\kappa_y + 2\kappa_y\eta_x^2/(\tau\beta^2)$ and by induction in~\eqref{Algo1-lemma-3-eq-2} we deduce the result. Furthermore we have $|\gamma|<1$. Indeed, using the hypothesis $\eta_x< \tau\beta/(2\kappa_y)$ it is straightforward to verify that $\gamma<1$, while by construction $-1/2<\gamma$ since $\tau\in (0,1]$ and $\kappa_y\geq 1$. 
		\end{proof}

		\subsection{Proof of Theorem~\ref{thm_non_convex_strongly_concave_case_1}}
		
		Thanks to Lemma~\ref{Algo1-lemma-3} we can now bound~\eqref{Algo1-lemma-1_main_eq}, which constitutes the core of the proof of Theorem~\ref{thm_non_convex_strongly_concave_case_1}.
		
		\begin{proof}[Proof of Theorem \ref{thm_non_convex_strongly_concave_case_1}] Plugging Lemma~\ref{Algo1-lemma-3} into Lemma~\ref{Algo1-lemma-1} we get
			\begin{align}\label{main_proof_algo_1_eq_1}
				\varphi(x_N) \leq &\varphi(x_0) - \frac{\eta_x}{2}(1-2\lc\eta_x)\sum_{k=0}^{N-1}\norm{\nabla \varphi(x_k)}^2 + \frac{\eta_x}{2}(1+2\lc\eta_x)L_{xy}^2\delta_0 \sum_{k=0}^{N-1}\gamma^k \notag \\
				&+\frac{\eta_x}{2}(1+2\lc\eta_x) \left( \gamma - 1 +\frac{\tau}{2\kappa_y}\right)\sum_{k=0}^{N-1}\sum_{j=0}^{k-1}\gamma^{k-1-j}\norm{\nabla \varphi(x_j)}^2.
			\end{align}
			Since $\eta_x < \tau\beta/(2\kappa_y)$ and $|\gamma|<1$, we have that $\sum_{k=0}^{N-1} \gamma^k < \sum_{k=0}^{+\infty} \gamma^k = \frac{1}{1-\gamma}>0$ and 
			\begin{equation*}
				\sum_{k=0}^{N-1} \sum_{j=0}^{k-1} \gamma^{k-1-j} \left\| \nabla \varphi(x_j) \right\|^2 
				= \sum_{j=0}^{N-2} \left( \sum_{m=0}^{N-2-j} \gamma^m \right) \left\| \nabla \varphi(x_j) \right\|^2
				\leq \frac{1}{1-\gamma} \sum_{k=0}^{N-1} \left\| \nabla \varphi(x_k) \right\|^2, 
			\end{equation*}
			where we exchange the order of summation and reindex via $m=k-1-j$. Then \eqref{main_proof_algo_1_eq_1} becomes
			\begin{align}\label{main_proof_algo_1_eq_2}
				\varphi(x_N) \leq &\varphi(x_0) - \frac{\eta_x(1-2\lc\eta_x)}{2}\sum_{k=0}^{N-1}\norm{\nabla \varphi(x_k)}^2 + \frac{\eta_x(1+2\lc\eta_x)L_{xy}^2\delta_0}{2(1-\gamma)}  \notag \\
				&+\frac{\eta_x(1+2\lc\eta_x)}{2} \left( \gamma - 1 +\frac{\tau}{2\kappa_y}\right)\frac{1}{1-\gamma}\sum_{k=0}^{N-1}\norm{\nabla \varphi(x_k)}^2.
			\end{align}
			Noting that $\left( \gamma -1 + \frac{\tau}{2\kappa_y} \right)\frac{1}{1-\gamma} = -\left(1 + \frac{\tau}{2\kappa_y(1-\gamma)} \right)$ and rearranging the terms in \eqref{main_proof_algo_1_eq_2}, we get
			\begin{equation}
				\varphi(x_N) \leq \varphi(x_0) - \frac{\eta_x}{2}\left( 2 + \frac{(1+2\lc\eta_x)\tau}{2\kappa_y(1-\gamma)} \right)\sum_{k=0}^{N-1}\norm{\nabla \varphi(x_k)}^2 + \frac{\eta_x(1+2\lc\eta_x)L_{xy}^2\delta_0}{2(1-\gamma)}.
			\end{equation}
			Recalling that $1-\gamma>0$, the final result is deduced by applying Proposition~\ref{sufficient_condition_epsilon_stationary_point}.
		\end{proof}

		\subsection{Step-size discussion}
		Here we compare the step-size constraints of our Theorem~\ref{thm_non_convex_strongly_concave_case_1} with the ones proposed in the related literature. 
		
		We recall that if the coupling term $\Phi$ has a jointly Lipschitz continuous gradient,  then there exists $L_{\Phi}>0$ such that $\norm{\nabla \Phi(x,y) - \nabla \Phi(x',y')}
		\leq L_{\Phi} \norm{(x,y)-(x',y')}$, $\forall x,x' \in \R^d$ and  $y,y' \in \R^n$. This is the hypothesis made in \cite{lin2025two, bot2023lternatingproximalgradientstepsstochastic}. This property implies that $L_{yy}=L_{xy}=L_{yx}=L_\Phi$ so that our  constraint~\eqref{constraint_eta_x_algo1} on $\eta_x$ simplifies to $0<\eta_x < \eta_y/(2\kappa_y^2)$. Table~\ref{comparation_under_jointly_lipschitz} summarizes our results under the jointly Lipschitz hypothesis.     We observe that $16(\kappa_y +1)^2>3(\kappa_y +1)^2 > 2\kappa_y^2$, which shows that Theorem~\ref{thm_non_convex_strongly_concave_case_1} allows to  take larger step-sizes than \cite{lin2025two, bot2023lternatingproximalgradientstepsstochastic}.
		
		\begin{table}[H]
			\centering
			\caption{Admissible step-sizes under the jointly Lipschitz setting for \algoone.}
			\begin{tabular}{c>{\centering\arraybackslash}p{4cm}>{\centering\arraybackslash}p{4cm}>{\centering\arraybackslash}p{4cm}}
				&  \cite{lin2025two} & \cite{bot2023lternatingproximalgradientstepsstochastic} & Ours  \\\hline
				$\eta_y\leq $ & $1/\lp$& $1/\lp$& $ 1/\lp$ \\ 
				$\eta_x \leq$ & $\eta_y/(16(\kappa_y + 1)^2)$ &  $\eta_y/(3(\kappa_y + 1)^2)$ & $\eta_y/(2\kappa_y^2)$ \\
				\hline
			\end{tabular}
			\label{comparation_under_jointly_lipschitz}
		\end{table}
		
		\paragraph{Under the block-wise Lipschitz framework}  We share this hypothesis with \cite{Cohen2025AlternatingandParallelProximalGradientMethods}. Then in Table~\ref{comparation_under_block_wise} we present the respective step-sizes
		To carried out this comparison, we assume\footnote{If $\varphi$ is of class $C^{2}$, then by Clairaut's theorem this assumption is straightforwardly satisfied.} that $L_{xy}=L_{yx}$. In this setting, recalling that $\lc = L_{xx} + L_{xy}L_{yx}/\mu$ and observing that $\mu L_{xy}^2 + \mu L_{xx} + L_{xy}^2 + 4\kappa_y^2L_{yx}^2 >0 $, we deduce that our algorithm allows for larger step-sizes than in~\cite{Cohen2025AlternatingandParallelProximalGradientMethods}. Nevertheless, if $L_{xy} \neq L_{yx}$, it is not possible to determine \emph{for all the cases} which bound on $\eta_x$ is tighter without additional assumptions. 
		\begin{table}[H]
			\centering
			\caption{Admissible step-sizes under the block-wise Lipschitz setting for \algoone.
			}
			\begin{tabular}{c>{\centering\arraybackslash}p{8cm}>{\centering\arraybackslash}p{5cm}}
				& \cite{Cohen2025AlternatingandParallelProximalGradientMethods} & Ours  \\
				\hline
				$\eta_y\leq $ & $ 1/L_{yy}$& $ 1/L_{yy}$\\
				$\eta_x \leq$&  $\mu / ( \ \mu(L_{xy}^2 + \lc) + 2\kappa_y(2\kappa_y+1)L_{yx}^2 \ )$ & $\eta_yL_{yy}\mu / ( \ 2\kappa_y L_{xy}L_{yx} \ )$ \\
				\hline
			\end{tabular}
			\label{comparation_under_block_wise}
		\end{table}

		\section{Convergence analysis for \algotwo}\label{section_convergence_analysis_for_algorithm_2}
		
		We recall that \algotwo is defined $\forall k\geq 0$ by
		\begin{align}
				x_{k+1} &= \prox{\eta_x \Phi(\cdot,y_{k})}(x_k)  \label{algo_2_update_x_sect_4} \\
				y_{k+1} &= \prox{\eta_y h}\left(y_k + \eta_y \nabla_y \Phi(x_{k+1}, y_k)\right). \label{algorithm_prox_da_a_2_sect_4}
		\end{align}
		Note that the $y$-update in \algotwo is the same as in \algoone. The difference between both algorithms lies in the $x$-update: an explicit gradient step is performed in~\eqref{algo_1_update_x_sect_3}, whereas \algotwo realizes a proximal step on the coupling function~\eqref{algo_2_update_x_sect_4}  $\Phi$ that is $\rho-$weakly convex.  Hence, to have a well-posed proximal step (see Definition~\ref{proximal_operator}), we make the following assumption.
		\begin{assumption}\label{assumption_4}
			The step-size $\eta_x$ satisfies $\eta_x \rho <1$. Hence the proximal step of~\eqref{algo_2_update_x_sect_4} is single-valued.
		\end{assumption}
		
		Let us now state the main result of this section.
		\begin{theorem}\label{main_thm_algo_2}
			Let Assumptions~\ref{assumption_1}, \ref{assumption_2}, \ref{assumption_3}, and \ref{assumption_4} hold true. For any initial point $(x_0,y_0) \in \R^d \times \R^n$ we consider the sequence $(x_k,y_k)_{k \geq 0}$ generated by \algotwo with step-sizes 
			\begin{equation*}
				0 < \eta_y \leq \frac{1}{L_{yy}} \quad \text{and} \quad 0< \eta_x < \min\left( \frac{\eta_y L_{yy} \mu}{ \sqrt{2}L_{xy}L_{yx}(\sqrt{2}\kappa_y + \eta_yL_{yy})} , \frac{1}{\rho} \right).
			\end{equation*}
			Then, for every $\varepsilon>0$ there exists $k = \mathcal{O} \left(\varepsilon^{-2}\right)$ such that $\norm{\nabla \varphi(x_k)}<\varepsilon$.
		\end{theorem}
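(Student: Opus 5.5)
The plan mirrors the proof of Theorem~\ref{thm_non_convex_strongly_concave_case_1}. The $y$-update is unchanged, so Lemma~\ref{Algo1-lemma-2} applies verbatim. Only two ingredients need to be redone for \algotwo: a descent inequality of the type of Lemma~\ref{Algo1-lemma-1}, and a recursion for $\delta_k$ of the type of Lemma~\ref{Algo1-lemma-3}. Once both are available, we plug one into the other, sum a geometric series and conclude with Proposition~\ref{sufficient_condition_epsilon_stationary_point}.

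\textbf{Step 1 (implicit $x$-step).} By Assumption~\ref{assumption_4} the prox is single-valued. Its optimality condition reads
\[
x_{k+1}=x_k-\eta_x\nabla_x\Phi(x_{k+1},y_k),
\]
so \algotwo is an implicit gradient step. We then write
\[
\nabla_x\Phi(x_{k+1},y_k)=\nabla\varphi(x_{k+1})+e_k,
\qquad e_k:=\nabla_x\Phi(x_{k+1},y_k)-\nabla_x\Phi(x_{k+1},y^*_{k+1}).
\]
By Assumption~\ref{assumption_2}(i)(b), $\norm{e_k}\le L_{xy}\norm{y_k-y^*_{k+1}}$. We would control this by $L_{xy}\sqrt{\delta_k}$ plus $L_{xy}(L_{yx}/\mu)\norm{x_{k+1}-x_k}$, using Lemma~\ref{Lipschitz_continuity_of_the_solution_mapping}. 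Alternatively, we keep the split $\nabla_x\Phi(x_{k+1},y_k)=\nabla\varphi(x_k)+(\text{error})$, where the error involves $\delta_k$ and $\norm{x_{k+1}-x_k}$. Since $\norm{x_{k+1}-x_k}=\eta_x\norm{\nabla_x\Phi(x_{k+1},y_k)}$, these terms can be absorbed for small $\eta_x$.

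\textbf{Step 2 (descent).} We apply the descent lemma (Lemma~\ref{Descen_lemma}) to $\varphi$ between $x_k$ and $x_{k+1}$, with $\varphi$ being $L_\varphi$-smooth by Proposition~\ref{Smoothness_of_the_max_function}. A cleaner route uses the prox definition directly:
\[
\Phi(x_{k+1},y_k)+\tfrac{1}{2\eta_x}\norm{x_{k+1}-x_k}^2\le\Phi(x_k,y_k),
\]
which even strengthens to a $(\tfrac1{\eta_x}-\tfrac\rho2)$ term by weak convexity. Combined with Young's inequality, either route yields
\[
\varphi(x_N)\le\varphi(x_0)-c_1\eta_x\sum_k\norm{\nabla\varphi(x_k)}^2+c_2\eta_xL_{xy}^2\sum_k\delta_k,
\]
with explicit constants, as in Lemma~\ref{Algo1-lemma-1}.

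\textbf{Step 3 (recursion for $\delta_k$).} This is the main obstacle. Feeding $\norm{y^*_{k+1}-y^*_k}^2\le (L_{yx}/\mu)^2\norm{x_{k+1}-x_k}^2$ into Lemma~\ref{Algo1-lemma-2}, we must bound $\norm{x_{k+1}-x_k}$ in terms of $\norm{\nabla\varphi(x_k)}$ and $\sqrt{\delta_k}$, but the implicit step evaluates the gradient at $x_{k+1}$. We would write
\[
\norm{x_{k+1}-x_k}\le\eta_x\norm{\nabla_x\Phi(x_{k+1},y_k)-\nabla_x\Phi(x_k,y_k)}+\eta_x\norm{\nabla_x\Phi(x_k,y_k)}.
\]
The first term cannot be handled by $L_{xx}$-Lipschitzness naively, since that would yield an $\eta_xL_{xx}$ factor absent from the stated bound. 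Instead we use nonexpansiveness-type properties of the prox of a $\rho$-weakly convex function. Namely $x\mapsto \prox{\eta_x\Phi(\cdot,y_k)}(x)$ is $(1-\eta_x\rho)^{-1}$-Lipschitz, and $x_k$ would be a fixed point if $\nabla_x\Phi(x_k,y_k)=0$. Hence $\norm{x_{k+1}-x_k}$ is bounded by a multiple of $\eta_x\norm{\nabla_x\Phi(x_k,y_k)}$.

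Alternatively, one treats $x_{k+1}-x_k$ via the $y$-mismatch only: $\norm{\nabla_x\Phi(x_{k+1},y_k)}\le \norm{\nabla\varphi(x_{k+1})}+L_{xy}\norm{y_k-y^*_{k+1}}$, and $\norm{y_k-y^*_{k+1}}\le\sqrt{\delta_k}+(L_{yx}/\mu)\norm{x_{k+1}-x_k}$. Solving for $\norm{x_{k+1}-x_k}$ yields a factor $1/(1-\eta_x/\beta)$. This is plausibly the source of the extra $\eta_yL_{yy}=\tau$ term in the denominator of the step-size condition: after Young's inequality with weight $\sqrt2$, the contraction factor becomes
\[
\gamma=1-\tfrac{\tau}{2\kappa_y}+\tfrac{2\kappa_y\eta_x^2}{\tau\beta^2}+(\text{cross term}),
\]
and $\gamma<1$ is equivalent to the stated bound $\eta_x<\tau\beta/(\sqrt2(\sqrt2\kappa_y+\tau))$.

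\textbf{Step 4 (conclusion).} As in the proof of Theorem~\ref{thm_non_convex_strongly_concave_case_1}, we unroll to
\[
\delta_k\le\gamma^k\delta_0+C\sum_j\gamma^{k-1-j}\norm{\nabla\varphi(x_j)}^2 .
\]
Substituting into Step 2, we exchange the sums, bound the geometric series by $1/(1-\gamma)$, and check that the resulting coefficient of $\sum\norm{\nabla\varphi(x_k)}^2$ is positive under the step-size condition. Proposition~\ref{sufficient_condition_epsilon_stationary_point} then gives $k=\mathcal O(\varepsilon^{-2})$.
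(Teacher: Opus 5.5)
Your architecture is the paper's: a descent inequality for $\varphi$, the contraction of Lemma~\ref{Algo1-lemma-2}, geometric summation, then Proposition~\ref{sufficient_condition_epsilon_stationary_point}. Your second variant of Step~3 is sound. It absorbs $\norm{x_{k+1}-x_k}$ through the $y$-mismatch with the factor $(1-\eta_x/\beta)^{-1}$, does the job of Lemma~\ref{Algo2-lemma-3}, and gives $\gamma<1$ for $\eta_x<\tau\beta/(2\kappa_y+\tau)$. The stated bound $\tau\beta/(2\kappa_y+\sqrt{2}\tau)$ implies this; the paper works with squares and a Young weight optimized at $\theta=\tau/(\sqrt{2}\kappa_y)$, which is where its $\sqrt{2}$ comes from.

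Step~2, however, is not secured. Any use of the $\lc$-descent lemma puts $\lc$ into the step-size condition. Expanding at $x_k$ also brings in $L_{xx}$, to compare $\nabla_x\Phi(x_{k+1},y_k)$ with $\nabla_x\Phi(x_k,y_k)$. The theorem's conditions involve neither constant.

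The prox inequality decreases $\Phi(\cdot,y_k)$, not $\varphi$. Converting it requires the dual gap $\varphi(x_{k+1})-\Phi(x_{k+1},y_k)+h(y_k)$, which for nonsmooth $h$ (e.g.\ an indicator with an active constraint) is only first order in $\norm{y^*_{k+1}-y_k}$.

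The missing ingredient is the $\rho$-weak convexity of $\varphi$ (Proposition~\ref{Smoothness_of_the_max_function}), used at the new point: $\varphi(x_{k+1})\le\varphi(x_k)+\langle\nabla\varphi(x_{k+1}),x_{k+1}-x_k\rangle+\frac{\rho}{2}\norm{x_{k+1}-x_k}^2$. Combined with the implicit step, this is Lemma~\ref{Algo2_lemma_1}: a descent in $\norm{\nabla\varphi(x_{k+1})}^2$, with error $L_{xy}^2\norm{y^*_{k+1}-y_k}^2$ and only $\rho$ in the constants. Your bookkeeping should therefore be in $\nabla\varphi(x_{k+1})$ and $\norm{y^*_{k+1}-y_k}$ throughout, not in $\nabla\varphi(x_k)$ and $\delta_k$ as in your Steps~2 and~4.

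The more serious gap is the last sentence of Step~4. Checking that the final descent coefficient is positive is not routine, and it fails on part of the stated range. In the unrolled recursion, the gradient coefficient is proportional to $\gamma-1+\frac{\tau}{2\kappa_y}>0$; it is exactly the $\eta_x^2$-term added to $1-\frac{\tau}{2\kappa_y}$. After exchanging the sums, the $\delta$-terms therefore place a term of order $\frac{\eta_x}{2}(1+2\rho\eta_x)\frac{\gamma-1+\tau/(2\kappa_y)}{1-\gamma}$ in front of $+\sum_k\norm{\nabla\varphi(x_{k+1})}^2$. This must be dominated by the descent term $\frac{\eta_x}{2}(1-2\rho\eta_x)\sum_k\norm{\nabla\varphi(x_{k+1})}^2$.

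Two facts prevent this on the whole range:
\begin{itemize}
\item $\frac{\gamma-1+\tau/(2\kappa_y)}{1-\gamma}=\frac{\tau}{2\kappa_y(1-\gamma)}-1\to+\infty$ as $\eta_x$ approaches $\tau\beta/(\sqrt{2}(\sqrt{2}\kappa_y+\tau))$, where $\gamma\uparrow 1$.
\item With the paper's Young splitting, $1-2\rho\eta_x\le 0$ once $\eta_x\ge 1/(2\rho)$.
\end{itemize}
So this argument cannot certify the whole range $\eta_x<\min(\cdot,1/\rho)$. You need an extra quantitative margin, e.g.\ $\eta_x$ a fixed fraction of the $\gamma<1$ threshold, together with $\eta_x<1/(2\rho)$ or a sharper Young splitting in the descent step.

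Do not expect to import this step from the paper. Its proof concludes via $(\gamma-1+\frac{\tau}{2\kappa_y})\frac{1}{1-\gamma}=-(1+\frac{\tau}{2\kappa_y(1-\gamma)})$, whereas the correct value is $\frac{\tau}{2\kappa_y(1-\gamma)}-1>0$. The coefficient $\frac{\eta_x}{2}(2+\cdots)$ obtained there thus rests on a sign error.
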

		
		Recall that we parameterize the step-size $\eta_y$ as $\eta_y = \tau / L_{yy}$ for some $\tau \in (0,1]$. 
		For simplicity, we write $(x_k, y_k)_{k \geq 1}$ to avoid explicitly including $\tau$ in the notation for the sequences generated by \algotwo.

		In order to prove Theorem~\ref{main_thm_algo_2}, we first provide key results which form the basis of the convergence analysis. 
		\begin{notation}
			We recall that the mapping $\varphi$ is defined as $\varphi(x) = \max_{y \in \R^n} \ \left( \Phi(x,y)-h(y) \right)$ and that under strong concavity (Assumption~\ref{assumption_1}), $y^*(x) = \operatorname{argmax}_{y \in \R^n}\left( \Phi(x,y)-h(y)\right)$.    We also recall that we denote
			$y^*_k=y^*(x_k)$, $\delta_k=\norm{y^*(x_k)-y_k}^2$ and $\beta = \mu /(L_{xy}L_{yx})$.
		\end{notation}
		\begin{lemma}\label{Algo2_lemma_1} 
			Let Assumptions \ref{assumption_1} and \ref{assumption_2} hold true and $(x_k,y_k)_{k \geq 1}$ be a sequence generated \algotwo. Then for any $\eta_x >0$ we have
			\begin{equation}\label{Algo2_lemma_1_eq_statement}
				\varphi(x_{N}) \leq \varphi(x_0) - \frac{\eta_x}{2}\left( 1 -2\rho\eta_x\right) \sum_{k=0}^{N-1}\norm{\nabla \varphi(x_{k+1})}^2 + \frac{\eta_x}{2}\left( 1 + 2 \rho \eta_x \right) L_{xy}^2 \sum_{k=0}^{N-1} \norm{y^*(x_{k+1}) -y_k}^2.
			\end{equation}
		\end{lemma}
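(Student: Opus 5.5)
The plan is to prove a one-step inequality and then telescope over $k=0,\dots,N-1$. Concretely, the target for every $k$ is
\begin{equation*}
\varphi(x_{k+1}) \leq \varphi(x_k) - \frac{\eta_x}{2}(1-2\rho\eta_x)\norm{\nabla\varphi(x_{k+1})}^2 + \frac{\eta_x}{2}(1+2\rho\eta_x)L_{xy}^2\norm{y^*(x_{k+1})-y_k}^2 .
\end{equation*}
Unlike Lemma~\ref{Algo1-lemma-1}, which uses the descent lemma with $\lc$, the key tool here is the $\rho$-weak convexity of $\varphi$ from Proposition~\ref{Smoothness_of_the_max_function}. This is why $\rho$ replaces $\lc$ and why the gradient appears at $x_{k+1}$ rather than at $x_k$.

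\textbf{Step 1: implicit form of the $x$-update.} Since $\Phi(\cdot,y_k)$ is differentiable, the first-order optimality condition of the proximal step \eqref{algo_2_update_x_sect_4} gives
\begin{equation*}
x_{k+1} = x_k - \eta_x g_k, \qquad g_k := \nabla_x\Phi(x_{k+1},y_k).
\end{equation*}
Well-definedness of $x_{k+1}$ is assumed; for single-valuedness one invokes Assumption~\ref{assumption_4}. Set $a_k := \nabla\varphi(x_{k+1}) = \nabla_x\Phi(x_{k+1},y^*_{k+1})$ and $e_k := g_k - a_k$. By Assumption~\ref{assumption_2}(i)-(b),
\begin{equation*}
\norm{e_k}\le L_{xy}\norm{y^*(x_{k+1})-y_k}.
\end{equation*}

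\textbf{Step 2: weak convexity applied at $x_{k+1}$.} For a $C^1$ $\rho$-weakly convex function,
\begin{equation*}
\varphi(x_k)\ge \varphi(x_{k+1})+\langle a_k, x_k-x_{k+1}\rangle - \frac{\rho}{2}\norm{x_k-x_{k+1}}^2 .
\end{equation*}
This follows from convexity of $\varphi+\frac{\rho}{2}\norm{\cdot}^2$; the paper's Definition~\ref{def_strong_weak_convexity} uses $\frac{\rho}{2}$, which is consistent with the constants of the statement. Substituting $x_k - x_{k+1} = \eta_x g_k$ yields
\begin{equation*}
\varphi(x_{k+1})\le\varphi(x_k)-\eta_x\langle a_k,g_k\rangle+\frac{\rho\eta_x^2}{2}\norm{g_k}^2 .
\end{equation*}

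\textbf{Step 3: elementary inequalities.} For the cross term, write $\langle a_k,g_k\rangle = \norm{a_k}^2+\langle a_k,e_k\rangle$ and apply Young's inequality:
\begin{equation*}
-\eta_x\langle a_k,g_k\rangle\le -\frac{\eta_x}{2}\norm{a_k}^2+\frac{\eta_x}{2}\norm{e_k}^2 .
\end{equation*}
For the quadratic term, use $\norm{g_k}^2\le 2\norm{a_k}^2+2\norm{e_k}^2$. Collecting terms gives exactly the coefficients $-\frac{\eta_x}{2}(1-2\rho\eta_x)$ on $\norm{a_k}^2$ and $\frac{\eta_x}{2}(1+2\rho\eta_x)$ on $\norm{e_k}^2$. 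Bounding $\norm{e_k}^2$ by $L_{xy}^2\norm{y^*(x_{k+1})-y_k}^2$ yields the one-step inequality. Summing from $k=0$ to $N-1$ and telescoping gives \eqref{Algo2_lemma_1_eq_statement}.

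\textbf{Expected obstacle.} The delicate point is the choice of split in Step 3. The bare identity $-\langle a,g\rangle=\frac12(\norm{e}^2-\norm{a}^2-\norm{g}^2)$ does not reproduce the stated constants. The decomposition through $e_k$, combined with the factor-$2$ bound on $\norm{g_k}^2$, is what matches them. One should also justify that weak convexity of $\varphi$ holds with modulus $\rho$, which follows from Proposition~\ref{Smoothness_of_the_max_function}.
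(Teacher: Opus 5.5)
Your proof is correct and follows the paper's argument step by step. The shared steps are the $\rho$-weak convexity inequality for $\varphi$ taken at $x_{k+1}$, the implicit optimality condition $x_{k+1}-x_k=-\eta_x\nabla_x\Phi(x_{k+1},y_k)$, the $\pm\nabla\varphi(x_{k+1})$ split with Young's inequality, the bound $\|g_k\|^2\le 2\|a_k\|^2+2\|e_k\|^2$, and Assumption~2(i)(b). One correction to your ``obstacle'' remark: the identity $-\langle a,g\rangle=\tfrac12(\|e\|^2-\|a\|^2-\|g\|^2)$ does not fail; for every $\eta_x>0$ it yields a sharper one-step bound that implies the stated one.
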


		The proof of this result is presented in Appendix~\ref{app:lem_alg2}. Note that the right-hand side of \eqref{Algo2_lemma_1_eq_statement} contains $\norm{y^*(x_{k+1}) - y_k}^2$ whereas for \algoone the corresponding term in relation~\eqref{Algo1-lemma-1_main_eq} depends on $\delta_k = \norm{y^*(x_{k}) - y_k}^2$. This subtle distinction complicates the analysis:
		while $\delta_k$ explicitly measures the gap between the inner problem's maximizer $y^*(x_k)$ at $x_k$ and the current iterate $y_k$,
		the analogous interpretation does not hold for the term in~\eqref{Algo2_lemma_1_eq_statement}.
		This discrepancy arises from the fact that the proximal step can be seen as an ``implicit'' step, which introduces an index shift and alters the relationship between the terms.

		\subsection{Control of \texorpdfstring{$\norm{y^*_{k+1}-y_k}^2$}{y*{k+1}-yk}}

		The following result shows that the term $\norm{y^*_{k+1}-y_k}^2$ can be controlled by considering an additional constraint on the step-size $\eta_x$.
		
		\begin{lemma}\label{Algo2-lemma-3}
			Let Assumptions~\ref{assumption_1}, \ref{assumption_2}, and \ref{assumption_4} hold true and $(x_k,y_k)_{k \geq 1}$ be a sequence generated by \algotwo. Then, for all $\theta>0$ and $\eta_x^2 < \beta^2 / \left( 2(1+1/\theta) \right)$ we have that
			\begin{equation*}
				\norm{y_{k+1}^* - y_k}^2 \leq \frac{(1+\theta)\beta^2}{\beta^2- 2 \eta^2_x  \left(1 + 1/\theta\right)}\delta_k + \frac{2 \eta^2_x  \left(1 + 1/\theta \right)}{L_{xy}^2(\beta^2 - 2\eta_x^2(1+1/\theta))}\norm{\nabla \varphi(x_{k+1})}^2.
			\end{equation*}
			\begin{proof}
				Applying triangular and  Young inequalities to $\norm{y^*_{k+1} -y_k}^2$ we get for any $\theta>0$ 
				\begin{align}
					\norm{y^*_{k+1} -y_k}^2 
					&\leq \norm{y^*_{k+1}-y^*_{k}}^2 + 2\norm{y^*_{k+1}-y^*_k} \norm{y^*_k-y_k} + \norm{y^*_k-y_k}^2 \notag \\
					&\leq \left( 1 +\frac{1}{\theta}\right)\norm{y^*_{k+1}-y^*_{k}}^2 + (1+\theta)\norm{y^*_k-y_k}^2. \label{Algo2-lemma-3_eq_0}
				\end{align}
				Using the  $L_{yx}/\mu$-Lipschitz continuity of  $y^*$ w.r.t. $x$ (Lemma~\ref{Lipschitz_continuity_of_the_solution_mapping}) and leveraging the first-order optimality condition of the proximal step~\eqref{algo_2_update_x_sect_4}, $x_{k+1}-x_k=-\eta_x \nabla_x \Phi(x_{k+1},y_k)$, relation~\eqref{Algo2-lemma-3_eq_0} becomes
				\begin{equation}\label{Algo2-lemma-3_eq_1}
					\norm{y^*_{k+1} -y_k}^2 \leq \left( 1 +\frac{1}{\theta}\right) \frac{L_{yx}^2\eta_x^2}{\mu^2} \norm{\nabla_x \Phi(x_{k+1},y_k)}^2 + (1+\theta)\delta_k. 
				\end{equation}
				Applying the Cauchy-Schwartz inequality, Proposition~\ref{Smoothness_of_the_max_function} and Assumption~\ref{assumption_2}(i)(b),
				we get 
				\begin{align*}
					\norm{\nabla_x \Phi(x_{k+1},y_k)}^2 &\leq 2 \left(\norm{\nabla \varphi(x_{k+1}) - \nabla_x \Phi(x_{k+1},y_k)}^2 + \norm{\nabla \varphi(x_{k+1})}^2\right) \\
					&\leq 2\left( L_{xy}^2\norm{y^*_{k+1}-y_k}^2 +  \norm{\nabla \varphi(x_{k+1})}^2\right).
				\end{align*}
				Plugging this last inequality into \eqref{Algo2-lemma-3_eq_1} and reorganizing the terms we get
				\begin{equation*}
					\left(1- 
					\frac{2L_{yx}^2L_{xy}^2\eta_x^2}{\mu^2} \left(1 + \frac{1}{\theta}\right) \right)\norm{y^*_{k+1} -y_k}^2  \leq  \frac{2L_{yx}^2\eta_x^2}{\mu^2} \left(1 + \frac{1}{\theta}\right)\norm{\nabla \varphi(x_{k+1})}^2 + (1+\theta)\delta_k.
				\end{equation*}
				Defining $\beta=\mu/(L_{xy}L_{yx})$ yields
				\begin{equation*}
					\norm{y^*_{k+1} -y_k}^2 \left(1- 
					\frac{2\eta_x^2}{\beta^2} \left(1 + \frac{1}{\theta}\right) \right) \leq  \frac{2\eta_x^2}{\beta^2L_{xy}^2}  \left(1 + \frac{1}{\theta}\right)\norm{\nabla \varphi(x_{k+1})}^2 + (1+\theta)\delta_k.
				\end{equation*}
				We finally take $\eta_x^2 < \beta^2 / \left( 2(1+1/\theta) \right)$, which implies $1 - \frac{2\eta_x^2}{\beta^2} \left(1 + \frac{1}{\theta}\right) >0$ in the above inequality and allows to recover the statement of the lemma. 
			\end{proof}
		\end{lemma}

		Recall that, since the $y$ variable is updated in the same way for both \algoone and \algotwo, we can apply Lemma~\ref{Algo1-lemma-2} to \algotwo and thereby obtain that:
		\begin{equation}\label{general_bound_delta_k}
			\delta_{k+1} \leq \left( 1 - \frac{\tau}{2\kappa_y} \right) \delta_k + \frac{\kappa_y}{\tau} \norm{ y_{k+1}^* - y_k^* }^2 \quad \forall \ 0 < \tau \leq 1.
		\end{equation}
		We now refine~\eqref{general_bound_delta_k} specifically for \algotwo.
		
		\begin{lemma}[Bound of $\delta_{k}$ for \algotwo]\label{Algo2-lemma-2}
			Let Assumptions~\ref{assumption_1}, \ref{assumption_2}, \ref{assumption_3}, and \ref{assumption_4} hold true and $(x_k,y_k)_{k \geq 1}$ be the sequence generated by \algotwo. Then, for every $k \in \N_0$ we have
			\begin{equation*}
				\delta_k \leq \gamma^k \delta_0 + \left( \gamma - 1 + \frac{\tau}{2\kappa_y}\right)\frac{1}{(1+\theta)L_{xy}^2} \sum_{j=0}^{k-1}\gamma^{k-1-j}\norm{\nabla \varphi(x_{j+1})}^2 \quad \forall \ 0 < \tau \leq 1,
			\end{equation*}
			where $\gamma=1 -\frac{\tau}{2\kappa_y} + \frac{2\kappa_y\eta_x^2(1+\theta)}{\tau(\beta^2 - 2\eta_x^2(1+1/\theta))}$. Furthermore, if $\eta_x< \tau\beta/(\sqrt{4\kappa_y^2(1+\theta) +2 \tau^2(1+1/\theta)})$, then $|\gamma|<1$.
		\end{lemma}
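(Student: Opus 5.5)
The plan is to turn the general recursion \eqref{general_bound_delta_k} into a closed linear recursion in $\delta_k$, driven by $\norm{\nabla \varphi(x_{k+1})}^2$. The only term to handle is $\norm{y^*_{k+1}-y^*_k}^2$. As in the proof of Lemma~\ref{Algo2-lemma-3}, I would first use the $L_{yx}/\mu$-Lipschitz continuity of $y^*$ (Lemma~\ref{Lipschitz_continuity_of_the_solution_mapping}) together with the optimality condition of the proximal step, $x_{k+1}-x_k=-\eta_x\nabla_x\Phi(x_{k+1},y_k)$. This gives $\norm{y^*_{k+1}-y^*_k}^2\le \frac{\eta_x^2}{\beta^2L_{xy}^2}\norm{\nabla_x\Phi(x_{k+1},y_k)}^2$. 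Then Cauchy--Schwarz, Proposition~\ref{Smoothness_of_the_max_function} and Assumption~\ref{assumption_2}(i)(b) give
\[
\norm{y^*_{k+1}-y^*_k}^2\le \frac{2\eta_x^2}{\beta^2}\norm{y^*_{k+1}-y_k}^2+\frac{2\eta_x^2}{\beta^2L_{xy}^2}\norm{\nabla\varphi(x_{k+1})}^2 .
\]

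Next I plug in Lemma~\ref{Algo2-lemma-3} to bound $\norm{y^*_{k+1}-y_k}^2$. Write $D:=\beta^2-2\eta_x^2(1+1/\theta)$, which is positive under the step-size hypothesis (checked below). The coefficient of $\delta_k$ becomes $2\eta_x^2(1+\theta)/D$. The coefficient of $\norm{\nabla\varphi(x_{k+1})}^2$ becomes
\[
\frac{2\eta_x^2}{\beta^2L_{xy}^2}\left(\frac{2\eta_x^2(1+1/\theta)}{D}+1\right)=\frac{2\eta_x^2}{L_{xy}^2D}.
\]
Multiplying by $\kappa_y/\tau$ and inserting into \eqref{general_bound_delta_k} yields
\[
\delta_{k+1}\le\gamma\,\delta_k+\frac{2\kappa_y\eta_x^2}{\tau L_{xy}^2D}\norm{\nabla\varphi(x_{k+1})}^2 .
\]
Here $\gamma$ is exactly as in the statement. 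Moreover, $\gamma-1+\tau/(2\kappa_y)=2\kappa_y\eta_x^2(1+\theta)/(\tau D)$, so the forcing coefficient equals $(\gamma-1+\tfrac{\tau}{2\kappa_y})\frac{1}{(1+\theta)L_{xy}^2}$. Unrolling by induction on $k$, as in Lemma~\ref{Algo1-lemma-3}, gives the claimed bound; this step needs $\gamma\ge 0$, which is shown next.

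Finally I check $|\gamma|<1$. Since $D>0$, we have $\gamma>1-\tau/(2\kappa_y)\ge 1/2$, using $\tau\le1$ and $\kappa_y\ge1$. The condition $\gamma<1$ is equivalent to $4\kappa_y^2\eta_x^2(1+\theta)<\tau^2 D$, that is,
\[
\eta_x^2\bigl(4\kappa_y^2(1+\theta)+2\tau^2(1+1/\theta)\bigr)<\tau^2\beta^2,
\]
which is the stated hypothesis.

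The one subtle point, which I expect to be the main obstacle, is making sure that Lemma~\ref{Algo2-lemma-3} is applicable, i.e.\ that $D>0$. This follows from the hypothesis: it gives $\eta_x^2<\tau^2\beta^2/(2\tau^2(1+1/\theta))=\beta^2/(2(1+1/\theta))$.
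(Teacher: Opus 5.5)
Your proposal is correct and follows the paper's proof step for step. You use the Lipschitz continuity of $y^*$ with the prox optimality condition, split $\nabla_x\Phi(x_{k+1},y_k)$ around $\nabla\varphi(x_{k+1})$, bound $\norm{\nabla\varphi(x_{k+1})-\nabla_x\Phi(x_{k+1},y_k)}$ by $L_{xy}\norm{y^*_{k+1}-y_k}$, invoke Lemma~\ref{Algo2-lemma-3}, and unroll by induction. The only difference is that you bound $\norm{y^*_{k+1}-y^*_k}^2$ before inserting it into \eqref{general_bound_delta_k}, whereas the paper inserts first; this changes nothing.

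Your explicit checks that $D>0$ (so Lemma~\ref{Algo2-lemma-3} applies) and that $\gamma\ge 1/2$ (so the recursion can be multiplied by $\gamma$ during the unrolling) make rigorous two points the paper leaves implicit. The paper's proof only records $-1/2<\gamma$, which by itself would not justify that multiplication.
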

		
		\begin{proof}
			Using the Lipschitz property of $y^*$ (Lemma \ref{Lipschitz_continuity_of_the_solution_mapping}) and the first order optimality condition of the promximal operator~\eqref{algo_2_update_x_sect_4} (\textit{i.e,} $x_{k+1} - x_k = -\eta_x \nabla_x \Phi(x_{k+1},y_k)$) in~\eqref{general_bound_delta_k} we get
			\begin{equation}\label{eq_1_delta_k_for_algo_2}
				\delta_{k+1} \leq \left( 1-\frac{\tau}{2\kappa_y}\right) \delta_k + \frac{\kappa_yL_{yx}^2\eta_x^2}{\tau\mu^2}  \norm{\nabla_x \Phi(x_{k+1},y_k)}^2.
			\end{equation}
			Now, by Young's inequality  we have that 
			\begin{equation}\label{eq_2_delta_k_for_algo_2}
				\norm{\nabla_x \Phi(x_{k+1},y_k)}^2 \leq 2 \norm{\nabla_x \Phi(x_{k+1},y_k) - \nabla \varphi(x_{k+1})}^2 + 2\norm{\nabla \varphi(x_{k+1})}^2.
			\end{equation}
			Plugging~\eqref{eq_2_delta_k_for_algo_2} into~\eqref{eq_1_delta_k_for_algo_2} we get
			\begin{equation*}
				\delta_{k+1} \leq \left( 1-\frac{\tau}{2\kappa_y}\right) \delta_k + \frac{2\kappa_yL_{yx}^2\eta_x^2}{\tau\mu^2}  \norm{\nabla \varphi(x_{k+1})}^2 + \frac{2\kappa_yL_{yx}^2\eta_x^2}{\tau\mu^2} \norm{\nabla_x \Phi(x_{k+1},y_k) -\nabla \varphi(x_{k+1})}^2.
			\end{equation*}
			By Proposition \ref{Smoothness_of_the_max_function} and Assumption~\ref{assumption_2}(i) we deduce that $\norm{\nabla \varphi(x_{k+1})-\nabla_x \Phi(x_{k+1},y_k)}^2 \leq L_{xy}^2 \norm{y^*_{k+1}-y_k}^2$. Then, we obtain
			\begin{align}
				\delta_{k+1} \leq &\left( 1- \frac{\tau}{2\kappa_y}\right) \delta_k + \frac{2\kappa_yL_{yx}^2\eta_x^2}{\tau\mu^2}  \norm{\nabla \varphi(x_{k+1})}^2 + \frac{2\kappa_yL_{yx}^2\eta_x^2L_{xy}^2}{\tau\mu^2}  \norm{y^*_{k+1}-y_k}^2 .\notag 
			\end{align}
			Using $\beta=\mu/(L_{xy}L_{yx})$, the previous inequality becomes
			\begin{align}\label{eq_3_delta_k_for_algo_2}
				\delta_{k+1} \leq \left( 1- \frac{\tau}{2\kappa_y}\right) \delta_k + \frac{2\kappa_y \eta_x^2}{\tau \beta^2 L_{xy}^2} \norm{\nabla \varphi(x_{k+1})}^2 + \frac{2\kappa_y\eta_x^2}{\tau\beta^2} \norm{y^*_{k+1}-y_{k}}^2.
			\end{align}
			Plugging Lemma \ref{Algo2-lemma-3} into \eqref{eq_3_delta_k_for_algo_2} we deduce
			\begin{equation}
				\delta_{k+1} \leq \left(1 -\frac{\tau}{2\kappa_y} + \frac{2\kappa_y\eta_x^2(1+\theta)}{\tau(\beta^2 - 2\eta_x^2(1+1/\theta))} \right)\delta_k + \frac{2\kappa_y\eta_x^2}{\tau L_{xy}^2(\beta^2-2\eta_x^2(1+1/\theta))}\norm{\nabla \varphi(x_{k+1})}^2.
			\end{equation}
			Denoting $\gamma=1 -\frac{\tau}{2\kappa_y} + \frac{2\kappa_y\eta_x^2(1+\theta)}{\tau(\beta^2 - 2\eta_x^2(1+1/\theta))}$ and by induction in~\eqref{eq_3_delta_k_for_algo_2} we deduce that
			\begin{equation*}
				\delta_k \leq \gamma^k \delta_0 + \left( \gamma - 1 + \frac{\tau}{2\kappa_y}\right)\frac{1}{(1+\theta)L_{xy}^2} \sum_{j=0}^{k-1}\gamma^{k-1-j}\norm{\nabla \varphi(x_{j+1})}^2.
			\end{equation*}
			We finally observe that $\gamma<1$ for $\eta_x< \tau\beta/(\sqrt{4\kappa_y^2(1+\theta) +2 \tau^2(1+1/\theta)})$, while by construction $-1/2<\gamma$ since $0<\tau\leq1\leq\kappa_y$.
		\end{proof}

		Note that the constraint on $\eta_x$ in Lemma~\ref{Algo2-lemma-2} implies the one of Lemma~\ref{Algo2-lemma-3} since
		\begin{equation}\label{algo_2_eta_x_relation_before_theta_choice}
			\eta^2_x < \frac{\tau^2 \beta^2}{2 ( 2\kappa_y^2(1+\theta) + \tau^2(1+1/\theta) )}=\frac{ \beta^2}{4\kappa_y^2(1+\theta)/\tau^2 + 2(1+1/\theta) } < \frac{\beta^2}{2(1+1/\theta)}.
		\end{equation}
		To allow for the  largest admissible values of $\eta_x$, we fix $\theta= \tau / (\sqrt{2}\kappa_y)$ in order to maximize the mapping $\theta \mapsto \theta/(2\kappa_y^2\theta^2 + (2\kappa_y^2+\tau^2)\theta + \tau^2)$ in~\eqref{algo_2_eta_x_relation_before_theta_choice}. With this value, we obtain:
		$\eta_x<\tau\beta/(\sqrt{2}(\sqrt{2}\kappa_y+\tau)$.
		
		\begin{corollary}\label{cor_sect_4}
			Under assumptions \ref{assumption_1}-\ref{assumption_4}  we have that for $\eta_x < \tau\beta/(\sqrt{2}(\sqrt{2}\kappa_y + \tau))$
			\begin{align}
				\norm{y^*_{k+1}-y_k}^2 \leq& \frac{(\sqrt{2}\kappa_y + \tau)\beta^2\tau\delta_0}{\sqrt{2}\kappa_y ( \beta^2\tau - 2(\sqrt{2}\kappa_y + \tau)\eta^2_x ) }\gamma^k  +  \frac{2(\sqrt{2}\kappa_y + \tau)\eta_x^2}{L_{xy}^2(\beta^2\tau - 2(\sqrt{2}\kappa_y+\tau)\eta_x^2)}\norm{\nabla \varphi(x_{k+1})}^2 \notag \\
				&+ \left( \gamma -1 + \frac{\tau}{2\kappa_y}\right)\frac{\beta^2\tau}{L_{xy}^2(\beta^2\tau -2\eta_x^2(\sqrt{2}\kappa_y + \tau))}\sum_{j=0}^{k-1}\gamma^{k-1-j}\norm{\nabla \varphi(x_{j+1})}^2. \label{remark_sect_4_eq_1}
			\end{align}
			where $\displaystyle \gamma= 1 - \frac{\tau}{2\kappa_y} + \frac{\sqrt{2}(\sqrt{2}\kappa_y + \tau)\eta_x^2}{\beta^2\tau - 2(\sqrt{2}\kappa_y + \tau)\eta_x^2}$ with $|\gamma|<1$. 
		\end{corollary}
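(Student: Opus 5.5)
The corollary follows by substituting the bound of Lemma~\ref{Algo2-lemma-2} on $\delta_k$ into the bound of Lemma~\ref{Algo2-lemma-3} on $\norm{y^*_{k+1}-y_k}^2$, both taken with $\theta=\tau/(\sqrt{2}\kappa_y)$. The work is to check the step-size constraint, simplify the resulting constants, and verify $|\gamma|<1$.

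\emph{Step 1: admissibility of the step-size.} With $\theta=\tau/(\sqrt{2}\kappa_y)$ we have $1+\theta=(\sqrt{2}\kappa_y+\tau)/(\sqrt{2}\kappa_y)$ and $1+1/\theta=(\sqrt{2}\kappa_y+\tau)/\tau$. The denominator in the condition of Lemma~\ref{Algo2-lemma-2} becomes
\[
4\kappa_y^2(1+\theta)+2\tau^2(1+1/\theta)=2\sqrt{2}\kappa_y(\sqrt{2}\kappa_y+\tau)+2\tau(\sqrt{2}\kappa_y+\tau)=2(\sqrt{2}\kappa_y+\tau)^2.
\]
Its square root is $\sqrt{2}(\sqrt{2}\kappa_y+\tau)$, so the hypothesis $\eta_x<\tau\beta/(\sqrt{2}(\sqrt{2}\kappa_y+\tau))$ is exactly the condition of Lemma~\ref{Algo2-lemma-2}. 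Hence $|\gamma|<1$ holds. By \eqref{algo_2_eta_x_relation_before_theta_choice}, the condition of Lemma~\ref{Algo2-lemma-3} also holds, so in particular $\beta^2-2\eta_x^2(1+1/\theta)>0$.

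\emph{Step 2: simplify the constants.} Multiply numerator and denominator by $\tau$:
\[
\beta^2-2\eta_x^2(1+1/\theta)=\frac{\beta^2\tau-2(\sqrt{2}\kappa_y+\tau)\eta_x^2}{\tau}.
\]
This gives:
\begin{itemize}
\item The $\delta_k$ coefficient in Lemma~\ref{Algo2-lemma-3} is $(\sqrt{2}\kappa_y+\tau)\beta^2\tau/\bigl(\sqrt{2}\kappa_y(\beta^2\tau-2(\sqrt{2}\kappa_y+\tau)\eta_x^2)\bigr)$.
\item The gradient coefficient is $2(\sqrt{2}\kappa_y+\tau)\eta_x^2/\bigl(L_{xy}^2(\beta^2\tau-2(\sqrt{2}\kappa_y+\tau)\eta_x^2)\bigr)$.
\item The third term of $\gamma$ is $2\kappa_y\eta_x^2(1+\theta)\tau/\bigl(\tau(\beta^2\tau-\cdots)\bigr)=\sqrt{2}(\sqrt{2}\kappa_y+\tau)\eta_x^2/(\beta^2\tau-\cdots)$, which matches the stated $\gamma$.
\end{itemize}

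\emph{Step 3: substitute.} Insert $\delta_k\le\gamma^k\delta_0+(\gamma-1+\tau/(2\kappa_y))\frac{1}{(1+\theta)L_{xy}^2}\sum_{j<k}\gamma^{k-1-j}\norm{\nabla\varphi(x_{j+1})}^2$ into the $\delta_k$ term of Lemma~\ref{Algo2-lemma-3}. The $\gamma^k\delta_0$ term produces the first line of \eqref{remark_sect_4_eq_1}. In the sum, the factor $(1+\theta)$ cancels against $1/(1+\theta)$, leaving the coefficient $\beta^2\tau/\bigl(L_{xy}^2(\beta^2\tau-2\eta_x^2(\sqrt{2}\kappa_y+\tau))\bigr)$, as stated.

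This multiplication by a nonnegative coefficient preserves the inequality because $\gamma-1+\tau/(2\kappa_y)\ge 0$ and $\beta^2\tau-2(\sqrt{2}\kappa_y+\tau)\eta_x^2>0$. The only real obstacle is the bookkeeping in Steps 1 and 2, in particular recognising the perfect square $2(\sqrt{2}\kappa_y+\tau)^2$ that identifies the two step-size conditions.
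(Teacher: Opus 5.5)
Your proposal is correct and follows the paper's own proof, which simply combines Lemma~\ref{Algo2-lemma-2} with Lemma~\ref{Algo2-lemma-3} at $\theta=\tau/(\sqrt{2}\kappa_y)$. Your check that $4\kappa_y^2(1+\theta)+2\tau^2(1+1/\theta)=2(\sqrt{2}\kappa_y+\tau)^2$, and your simplification of the constants, supply the bookkeeping the paper leaves implicit. One small correction: the substitution only needs the $\delta_k$ coefficient $(1+\theta)\beta^2\tau/(\beta^2\tau-2(\sqrt{2}\kappa_y+\tau)\eta_x^2)$ to be positive, and the sign of $\gamma-1+\tau/(2\kappa_y)$ plays no role there.
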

		
		The proof of this result simply consists in combining Lemma~\ref{Algo2-lemma-2} with Lemma~\ref{Algo2-lemma-3} for $\theta=\tau/(\sqrt{2}\kappa_y)$.

		\subsection{Proof of Theorem~\ref{main_thm_algo_2}}
		
		Thanks to Corollary~\ref{cor_sect_4} we can now bound~\eqref{Algo2_lemma_1_eq_statement}, and show Theorem~\ref{main_thm_algo_2}. Before addressing this,  note that combining the constraint on $\eta_x$ in Corollary~\ref{cor_sect_4} with the one given in Assumption~\ref{assumption_4}, we get
		$\eta_x<\min\left( \ \tau \beta/(\sqrt{2}(\sqrt{2}\kappa_y +\tau)), 1/\rho \ \right)$. 
		This set of conditions on the step-size cannot be further reduced with our analysis. It is indeed possible to construct one dimensional examples of the form $-ax^2 + bxy - cy^2$, $a,b,c>0$, such that either the first or the second condition is the smallest one.
		
		\begin{proof}[Proof of Theorem \ref{main_thm_algo_2}]
			The strategy consists in refining Lemma~\ref{Algo2_lemma_1} in order to apply Proposition~\ref{sufficient_condition_epsilon_stationary_point}. Combining Corollary~\ref{cor_sect_4} with Lemma~\ref{Algo2_lemma_1} we have that
			\begin{align}\label{main_proof_algo_2_eq_0}
				\varphi(x_N) \leq &\varphi(x_0) - \frac{\eta_x}{2}(1-2\rho\eta_x)\sum_{k=0}^{N-1}\norm{\nabla \varphi(x_{k+1})}^2 + \frac{\eta_x(1+2\rho\eta_x)(\sqrt{2}\kappa_y + \tau)\beta^2\tau L_{xy}^2\delta_0}{2\sqrt{2}\kappa_y(\beta^2\tau - 2(\sqrt{2}\kappa_y +\tau)\eta_x^2)}\sum_{k=0}^{N-1}\gamma^k \notag \\
				&+ \frac{\eta_x(1+2\rho\eta_x)}{2}\left( \gamma -1 +\frac{\tau}{2\kappa_y}\right)\frac{\beta^2\tau}{\beta^2\tau - 2(\sqrt{2}\kappa_y +\tau)\eta_x}\sum_{k=0}^{N-1}\sum_{j=0}^{k-1}\gamma^{k-1-j}\norm{\nabla \varphi(x_{j+1})}^2 \notag \\
				&+\frac{\eta_x(1+2\rho\eta_x)}{2}\frac{2(\sqrt{2}\kappa_y+\tau)\eta_x^2}{(\beta^2\tau -2(\sqrt{2}\kappa_y + \tau)\eta_x^2)}\sum_{k=0}^{N-1}\norm{\nabla \varphi(x_{k+1})}^2.
			\end{align}
			Since  $|\gamma|<1$, we have $\sum_{k=0}^{N-1} \gamma^k < \sum_{k=0}^{+\infty} \gamma^k = \frac{1}{1-\gamma}$ and 
			\begin{equation*}
				\sum_{k=0}^{N-1} \sum_{j=0}^{k-1} \gamma^{k-1-j} \left\| \nabla \varphi(x_{j+1}) \right\|^2 
				= \sum_{j=0}^{N-2} \left( \sum_{m=0}^{N-2-j} \gamma^m \right) \left\| \nabla \varphi(x_{j+1}) \right\|^2
				\leq \frac{1}{1-\gamma} \sum_{k=0}^{N-1} \left\| \nabla \varphi(x_{k+1}) \right\|^2,
			\end{equation*}
			where we exchange the order of summation and reindex via $m=k-1-j$. Then \eqref{main_proof_algo_2_eq_0} becomes
			\begin{align}\label{main_proof_algo_2_eq_1}
				&\varphi(x_N)\leq \varphi(x_0) -\frac{\eta_x}{2}(1-2\rho\eta_x)\sum_{k=0}^{N-1}\norm{\nabla \varphi(x_{k+1})}^2 + \frac{\eta_x}{2}\frac{(1+2\rho\eta_x)(\sqrt{2}\kappa_y+\tau)\tau\beta^2 L_{xy}^2\delta_0}{\sqrt{2}\kappa_y(\tau\beta^2-2(\sqrt{2}\kappa_y +\tau)\eta_x^2)(1-\gamma)} \notag\\ &+\frac{\eta_x}{2}\frac{(1+2\rho\eta_x)}{\beta^2\tau-2\eta_x^2(\sqrt{2}\kappa_y+\tau)}\left(\left(\gamma-1+\frac{\tau}{2\kappa_y}\right)\frac{\beta^2\tau}{1-\gamma}+2(\sqrt{2}\kappa_y+\tau)\eta_x^2\right)\sum_{k=0}^{N-1}\norm{\nabla \varphi(x_{k+1})}^2. 
			\end{align}
			Since $\left(\gamma -1 +\frac{\tau}{2\kappa_y} \right)\frac{1}{1-\gamma}=-\left(1 + \frac{\tau}{2\kappa_y(1-\gamma)} \right)$, it follows that
			\begin{align}\label{main_proof_algo_2_eq_2}
				&\frac{(1+2\rho\eta_x)}{\beta^2\tau-2\eta_x^2(\sqrt{2}\kappa_y+\tau)}\left(\left(\gamma-1+\frac{\tau}{2\kappa_y}\right)\frac{\beta^2\tau}{1-\gamma}+2(\sqrt{2}\kappa_y+\tau)\eta_x^2\right) = \notag \\  
				=&-(1+2\rho\eta_x)\left( 1 + \frac{\tau^2\beta^2}{2\kappa_y(1-\gamma)(\beta^2\tau -2(\sqrt{2}\kappa_y + \tau)\eta_x^2)}\right).
			\end{align}
			Combining relations~\eqref{main_proof_algo_2_eq_2} and~\eqref{main_proof_algo_2_eq_1} we get
			\begin{align}\label{main_proof_algo_2_eq_4}
				\varphi(x_N) \leq &\varphi(x_0) -\frac{\eta_x}{2}\left( 2 + \frac{\tau^2\beta^2(1+2\rho\eta_x)}{2\kappa_y(1-\gamma)(\beta^2\tau-2\eta_x^2(\sqrt{2}\kappa_y + \tau))}\right)\sum_{k=0}^{N-1}\norm{\nabla \varphi(x_{k+1})}^2 \notag \\
				&+\frac{\eta_x}{2}\frac{(1+2\rho\eta_x)(\sqrt{2}\kappa_y+\tau)\tau\beta^2L_{xy}^2\delta_0}{\sqrt{2}\kappa_y(\tau\beta^2-2(\sqrt{2}\kappa_y + \tau)\eta_x^2)(1-\gamma)}.
			\end{align}
			Since $|\gamma|<1$ and $\beta^2\tau-2\eta_x^2(\sqrt{2}\kappa_y + \tau)>0$, we have that
			\begin{equation*}
				\frac{1}{(1-\gamma)(\beta^2\tau-2\eta_x^2(\sqrt{2}\kappa_y + \tau))} >0 \quad \text{and} \quad \frac{1}{(\tau\beta^2-2(\sqrt{2}\kappa_y+\tau)\eta_x^2)(1-\gamma)} >0.
			\end{equation*}
			Therefore, the result is deduced by applying Proposition~\ref{sufficient_condition_epsilon_stationary_point}.
		\end{proof}

		\section{Experiments}\label{section_numerical_experiments}
		
		In this section, we apply our algorithms to two nonconvex–strongly concave examples.
		\subsection{Toy experiment}
		Inspired by~\cite{bot2023lternatingproximalgradientstepsstochastic}, we consider the problem
		\begin{equation}\label{ne_example_1_eq_1}
			\min_{x \in \R} \max_{y \in \R} \left\{ g(x) +xy - 0.5 y^2  \right\}, \quad \text{where} \quad 
			g(x) = \begin{cases}
				0.5 - x^2 & \text{if} \ \lvert x \rvert \leq  0.5, \\
				\left( \lvert x\rvert -1 \right)^2  & \text{otherwise}.
			\end{cases}
		\end{equation}
		Note that $g$ is $C^1$ and $\rho=2$-weakly convex. We define $\Phi(x,y)=g(x) + xy - 0.5y^2$ so that $\Phi(x,.)$ is $\mu=1$-strongly concave w.r.t. $y$. Note that $\prox{\eta_x \Phi(\cdot,y_k)}(x_k)=\prox{\eta_x g}(x_k - \eta_x y_k)$ so following the closed formula for $\prox{\eta_x g}$ established in \cite[Exam. 9.1, pp. 21]{renaud2025moreauenvelope} we have that
		\begin{equation*}
			\prox{\eta_x g}(x_k - \eta_x y_k) = \begin{cases}
				(x_k - \eta_x y_k)/(1-2\eta_x) & \text{if} \quad \lvert x_k - \eta_x y_k \rvert \leq 0.5-\eta_x, \\
				(x_k - \eta_x y_k + 2\eta_x)/(1+2\eta_x) & \text{if} \quad x_k - \eta_x y_k \geq 0.5 - \eta_x, \\
				(x_k - \eta_x y_k - 2\eta_x)/(1+2\eta_x) & \text{otherwise}.
			\end{cases}
		\end{equation*}
		
		By the coupling structure the Lipschitz constants in Assumptions~\ref{assumption_1}--\ref{assumption_3} are $L_{xy}=L_{yx}=L_{yy}=1$ hence $\kappa_y=1$. We refer to the simultaneous\footnote{That is \algoone with the update $y_{k+1}=\prox{\eta_y h}\big(y_k+\eta_y \nabla_y \Phi(x_k,y_k)\big)$ for $y$.} version of \algoone as the \emph{Parallel Proximal Gradient Descent--Ascent (PPGA)} which is studied in~\cite{Cohen2025AlternatingandParallelProximalGradientMethods}. The mapping $\varphi$ has two stationary points at $x^* = \pm 2/3$. Following the theoretical step-size bounds, we set $\eta_y=1$ for all methods; while, we take $\eta_x=0.29$ for \algoone and \algotwo and $\eta_x=0.06$ for \emph{PPGA}. As shown in Figure~\ref{result_toy_example}, \algoone attains the smallest value of $|\nabla \varphi|$ which means that it reaches an $\varepsilon$-stationary point (here a critical point) before \algotwo and \emph{PPGA}, which require more iterations to reach this point. All methods converge to the same critical point. 
		
		\begin{figure}[ht]\label{pic_example_1}
			\centering
			\begin{minipage}{0.45\textwidth}
				\centering
				\includegraphics[width=.9\linewidth]{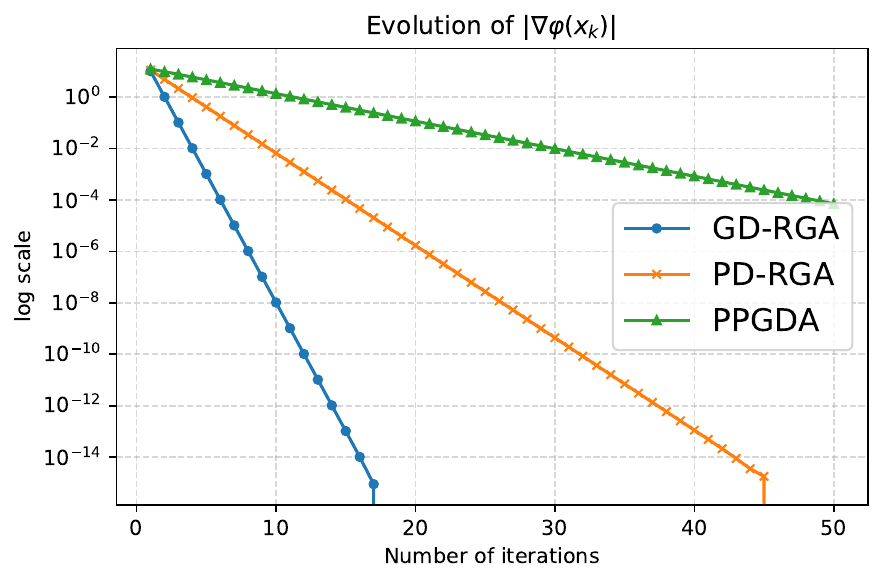}
			\end{minipage}\hfill
			\begin{minipage}{0.45\textwidth}
				\centering
				\includegraphics[width=.9\linewidth]{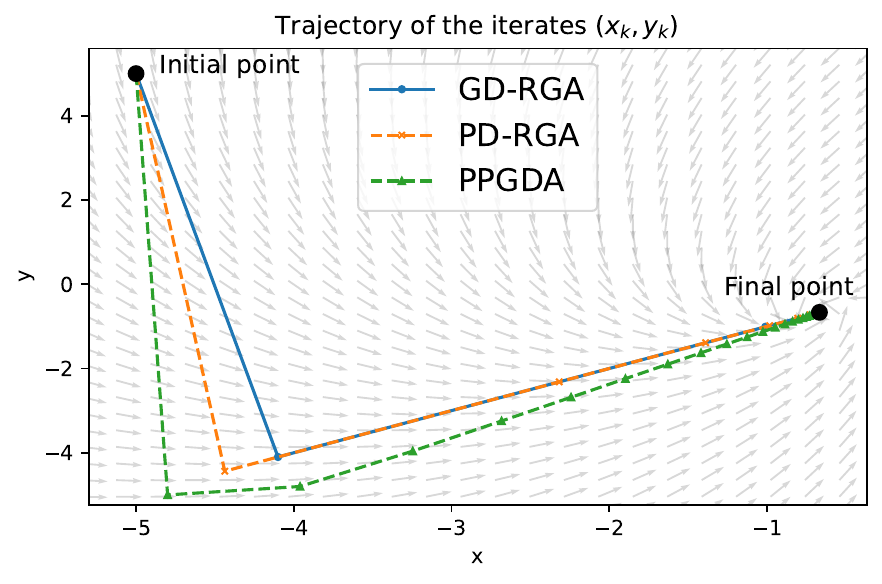}
			\end{minipage}
			\caption{With the initial point $(x_0, y_0) = (-5, 5)$, all algorithms converge to $x^* = -2/3$.}
			\label{result_toy_example}
		\end{figure}
		
		\subsection{Imaging application: Plug-and-Play (PnP) in a nutshell}
		We now provide an application of \algotwo to imaging restoration problems formulated as $\min_{x \in \R^d} \lambda f(x) + g(x)$ where $f$ is a data-fidelity term, $g$ a weakly convex regularizer and $\lambda > 0$ a parameter that controls the regularization.  Here we consider $f(x) = 1/(2\sigma^2)\norm{Ax-b}^2$ which corresponds to the fidelity to a degraded observation $b=Ax^* + n$ through a linear operator $A: \R^d \to \R^n$ and an additive  Gaussian noise $n$ with standard deviation $\sigma$.  Then, via dualization,  $\min_{x \in \R^d} \lambda/(2\sigma) \norm{Ax-b}^2 + g(x)$ can be written equivalently as the following min-max problem
		\begin{equation}\label{min_max_formulation_pnp}
			\min_{x \in \R^n} \max_{y \in \R^d} \quad \langle Ax-b, y \rangle - \frac{\sigma^2}{2\lambda}\norm{y}^2 + g(x).
		\end{equation}
		Considering the coupling $\Phi(x,y)=\langle Ax-b, y \rangle - \frac{\sigma^2}{2\lambda}\norm{y}^2 + g(x)$,  \algotwo writes
		\setcounter{algorithm}{1}
		\begin{equation*}
			\begin{cases}
					x_{k+1} &= \prox{\eta_x g}(x_k - \eta_x A^Ty_k)
					\\
					y_{k+1} &= y_k + \eta_y\left(Ax_{k+1} - b - \frac{\sigma^2}{\lambda}y_k \right), 
			\end{cases}
		\end{equation*}
		which can be seen as a nonconvex primal-dual method~\cite{chambolle2011first}.
		Next we consider the Plug-and-Play framework of~\cite{Hurault2022ProximalDF}, in which the regularization $g$ is replaced by a function $\phi_{\sigma}$ parameterized by a neural network. In practice, the function $\phi_\sigma$ is $\frac{L}{1-L}$-smooth, $\frac{L}{L+1}$-weakly convex with $L<1/2$, and it can be evaluated together with its proximal mapping. More precisely, we use a neural network $D_\sigma$ that satisfies  $D_{\sigma}=\prox{\phi_{\sigma}}$ and is pretrained to denoise images degraded with Gaussian noise~\cite{pesquet2021learning, Hurault2022ProximalDF}. Note that we are restricted to the step-size $\eta_x=1$, as we only have access to $\prox{\phi_{\sigma}}$ with the denoiser of~\cite{Hurault2022ProximalDF}.

		By the coupling structure of $\Phi$, the block-wise Lipschitz constants of Assumptions~\ref{assumption_1}-\ref{assumption_3} are $L_{xx}=\frac{L}{1-L}$, $L_{xy}=L_{yx}=\norm{A}_S$, $L_{yy}=\mu=\sigma^2/\lambda$ hence $\kappa_y=L_{yy}/\mu=1$. Note that the $y$-step size for \algotwo becomes $0<\eta_y \leq \lambda/\sigma^2$. Since $L<1/2$ it follows that $\rho=\frac{L}{L+1}>3$. Hence we have to guarantee $\eta_x=1<\min\left( \sigma^2/(\lambda (2+\sqrt{2})\norm{A}_S) ,3\right)$, which,  similarly to~\cite{Hurault2022ProximalDF}, restricts the value of the regularization parameter to $\lambda\leq \sigma^2/( (2+\sqrt{2})\norm{A}_S)$. 
		
		We now consider two image restoration problems, where $D_\sigma$ is the \texttt{GSDRUNet} denoiser \cite{Hurault2022ProximalDF} with pretrained weights\footnote{\url{https://github.com/samuro95/Prox-PnP}}. Test images are taken from the \texttt{Set3C} dataset available in the \texttt{DeepInverse} library \cite{deepinverse}. The observations $b$ are then generated using the \texttt{Downsampling} and \texttt{BlurFFT} operators implemented in \texttt{DeepInverse}. 
		
		\paragraph{Super-resolution}
		In single image super-resolution, a low-resolution image $b \in \R^n$ is observed from the unknown high-resolution one $x \in \R^d$ via $b = SHx + \sigma$ where $H \in \R^{d \times d}$ is the convolution with anti-aliasing kernel and $S \in \R^{n \times d}$ is the standard $s$-fold downsampling with $d=s^2 \times n$~\cite[Sect. 5.2.2]{Hurault2022ProximalDF}.%
		Here $\norm{A}_S=0.24$ and we consider $\sigma=0.03$ so we set $\lambda=0.00109$ and $\eta_y = 1.21$ which are the largest possible values. As shown in Figure~\ref{sr-exp1-butter-fly} the method provides high quality restoration, with PSNR values in par with state-of-the-art Plug-and-Play methods~\cite[Sect.~5.2.2]{Hurault2022ProximalDF}. 
		
		\begin{figure}[ht]
			\centering
			
			\begin{tabular}{cccc}
				\hspace{-0.3cm}\footnotesize
				Ground truth &\hspace{-0.3cm} \footnotesize Observation & \hspace{-0.3cm}\footnotesize Reconstruction &\hspace{-0.3cm} \footnotesize Evolution of $\Vert \nabla \varphi(x_k)\Vert$
				\\
				\hspace{-0.3cm}\includegraphics[width=0.23\linewidth]{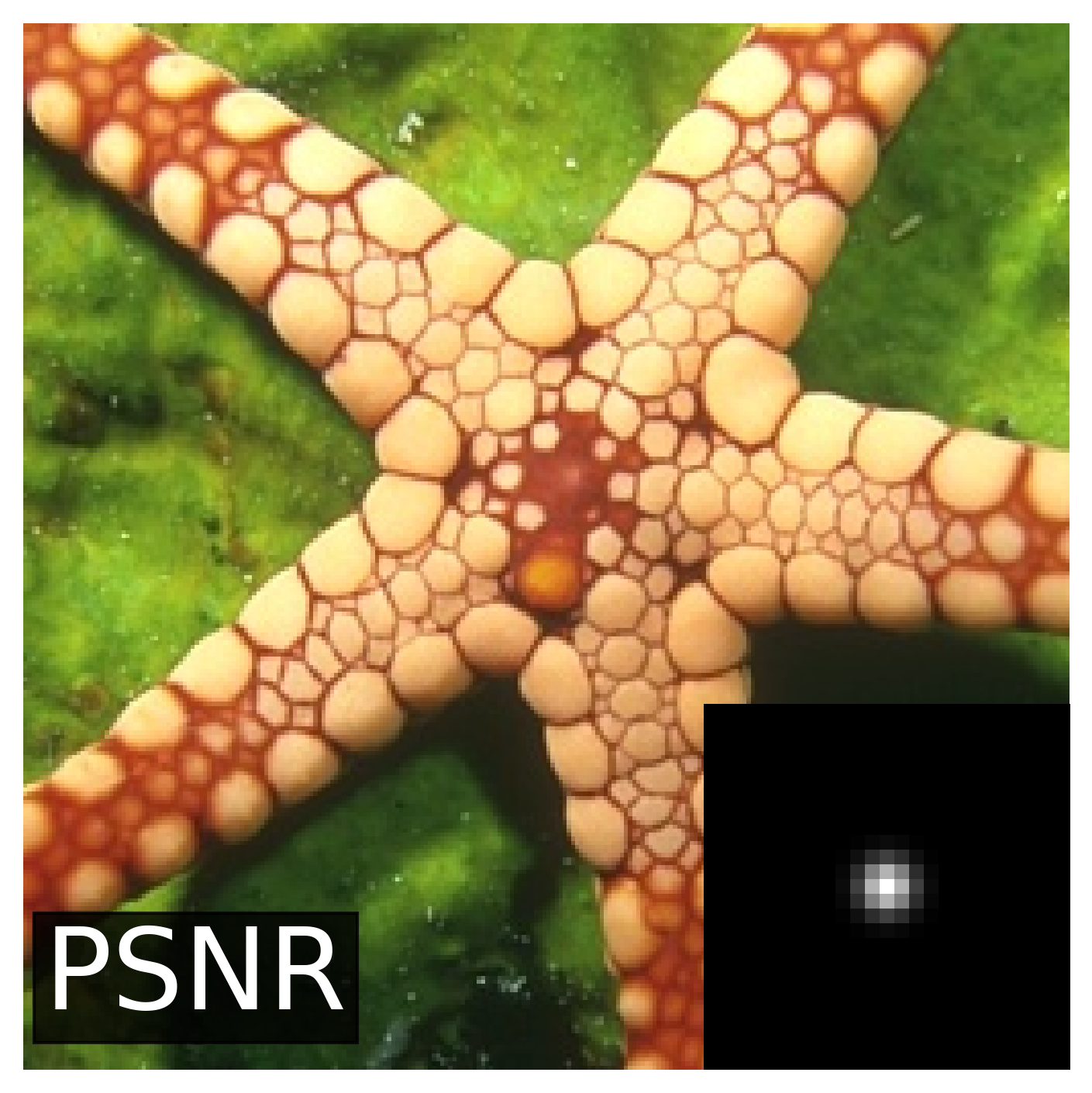}
				&\hspace{-0.3cm}
				\includegraphics[width=0.23\linewidth]{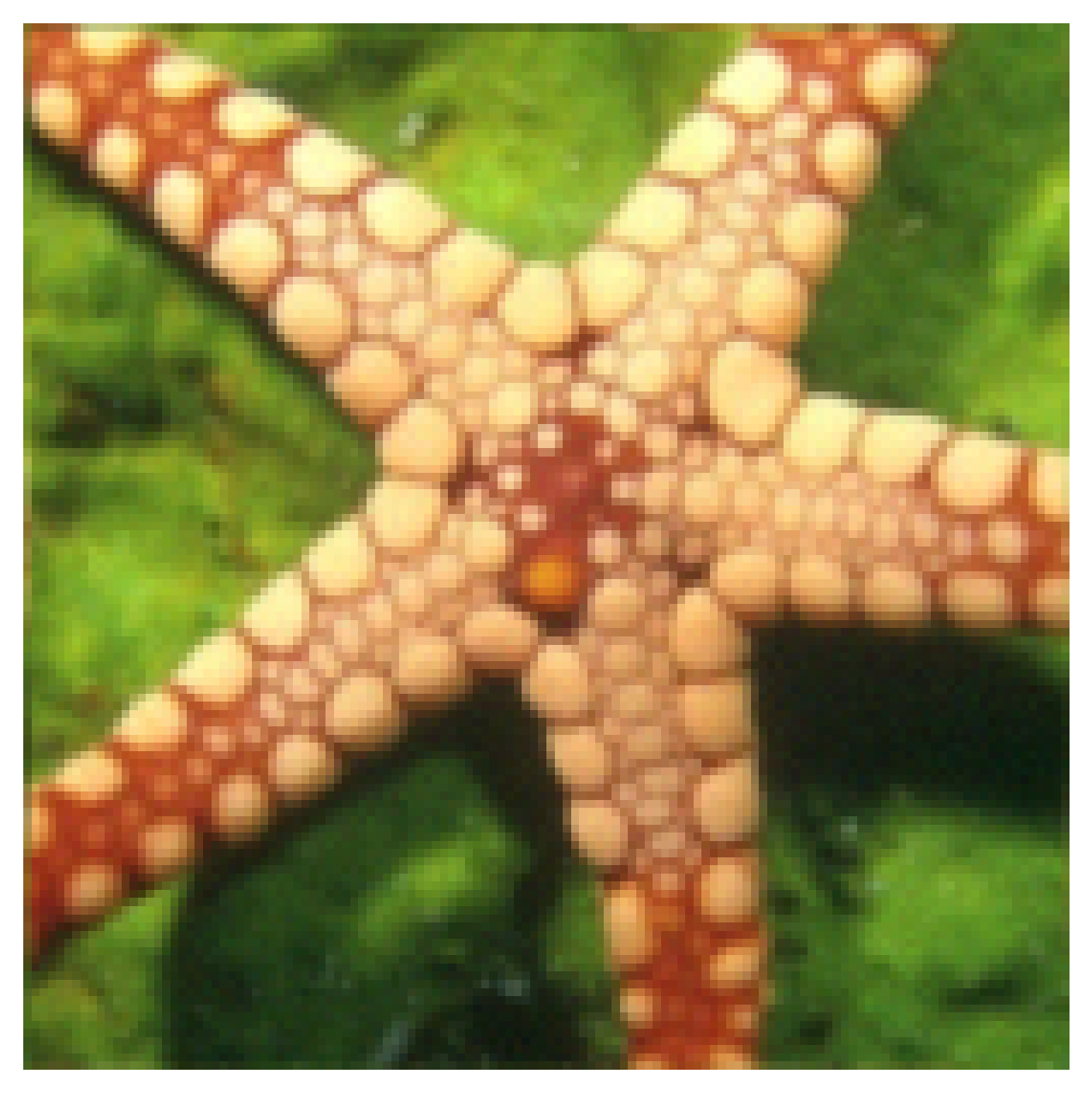}
				& \hspace{-0.3cm}
				\includegraphics[width=0.23\linewidth]{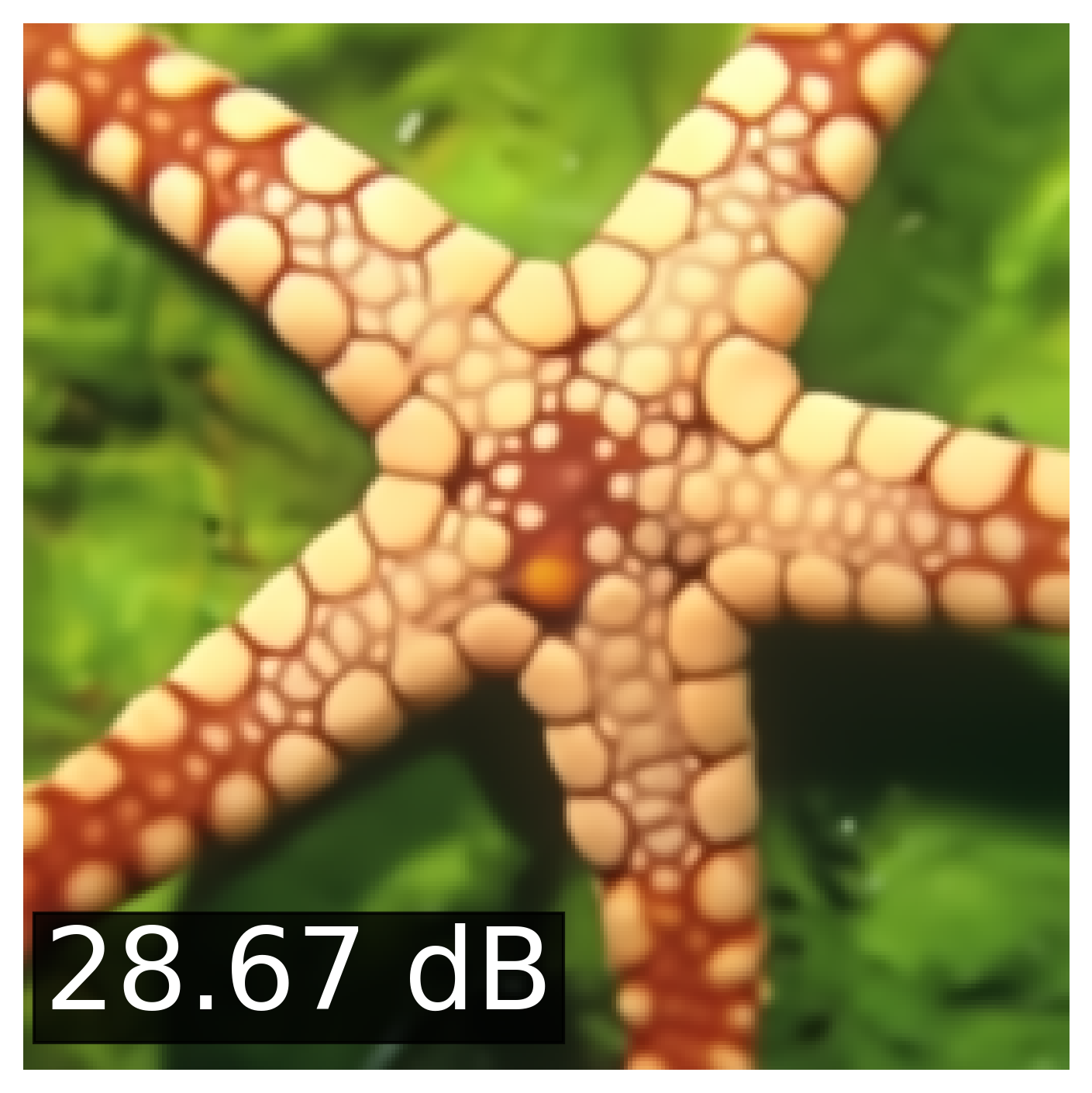}
				&\hspace{-0.3cm}
				\includegraphics[width=0.23\linewidth]{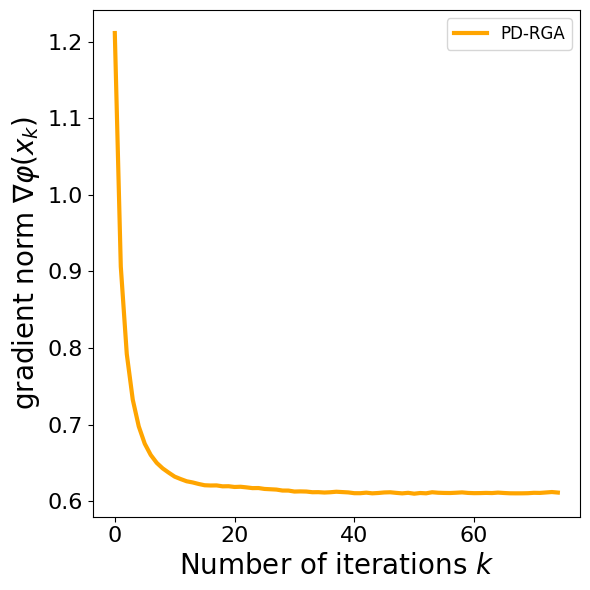}
			\end{tabular}
			
			\caption{Super-resolution of an butterfly from the data set CBSD68 downscaled with the indicated blur kernel and scale $s=2$.}
			\label{sr-exp1-butter-fly}
		\end{figure}

		\paragraph{Deblurring}
		
		For image deblurring the operator $A$ is a convolution  with circular boundary conditions. Specifically, $A=F \Lambda F^*$ where $F$ is the orthogonal matrix of the discrete Fourier transform (being $F^*$ its inverse) and $\Lambda$ a diagonal matrix so $b = F \Lambda F^* x + \sigma$~\cite[Sect. 5.1.1]{Hurault2022ProximalDF}. 
		Here $\norm{A}_S=1$ and we consider $\sigma=0.03$ so we set $\lambda=0.00026$ and $\eta_y = 0.28$. Figure~\ref{sr-exp1-butter-fly} illustrates that  the image reconstructed by \algotwo improves the PSNR by nearly 10 dB, with similar results as reported in~\cite[Sect.~5.2.2]{Hurault2022ProximalDF}.
		
		\begin{figure}[ht]
			\centering
			
			\begin{tabular}{cccc}
				\hspace{-0.3cm}\footnotesize
				Ground truth &\hspace{-0.3cm} \footnotesize Observation &\hspace{-0.3cm} \footnotesize Reconstructed &\hspace{-0.3cm} \footnotesize Evolution of $\Vert \nabla \varphi(x_k)\Vert$
				\\
				\hspace{-0.3cm}\includegraphics[width=0.23\linewidth]{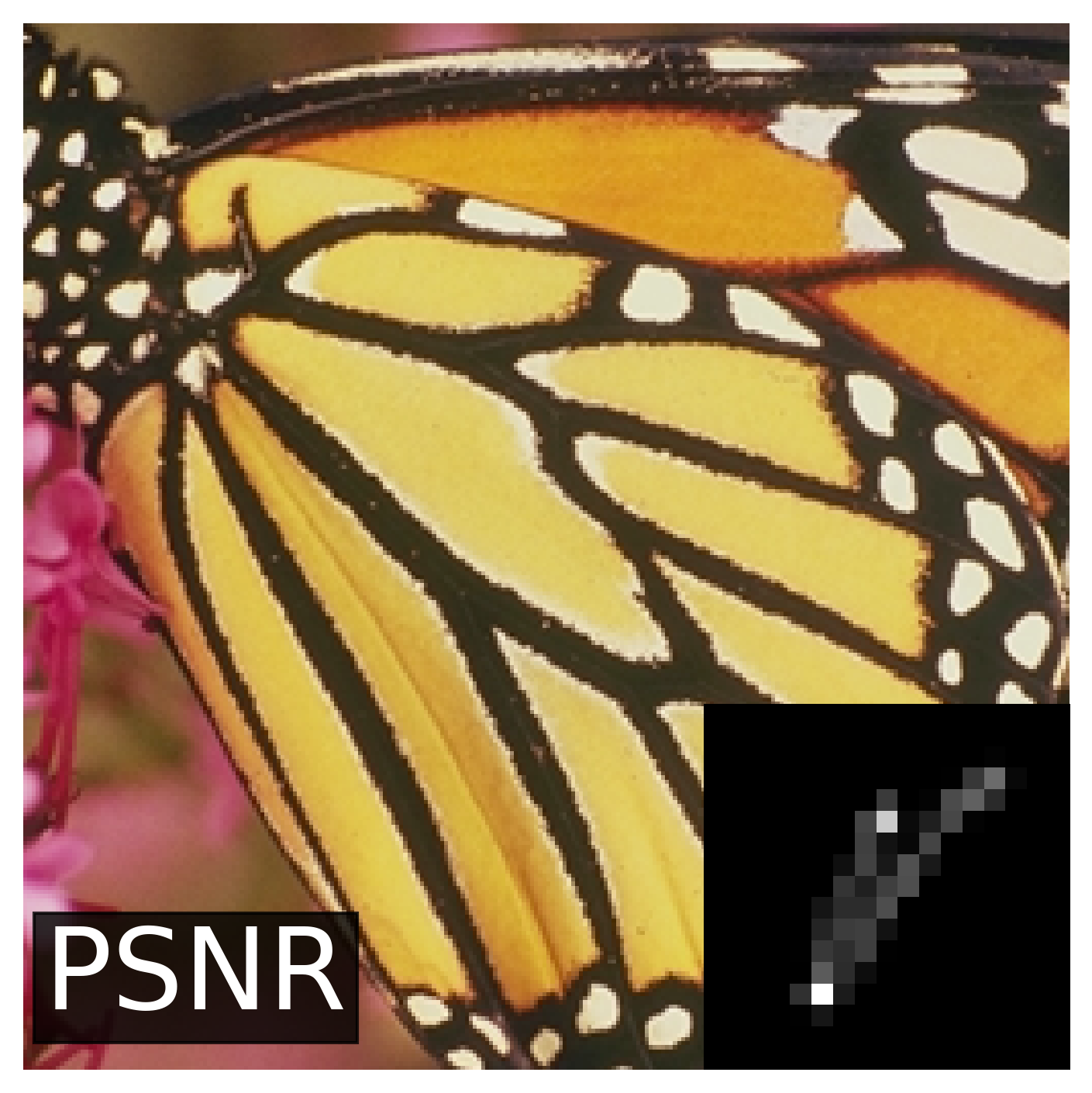}
				& \hspace{-0.3cm}
				\includegraphics[width=0.23\linewidth]{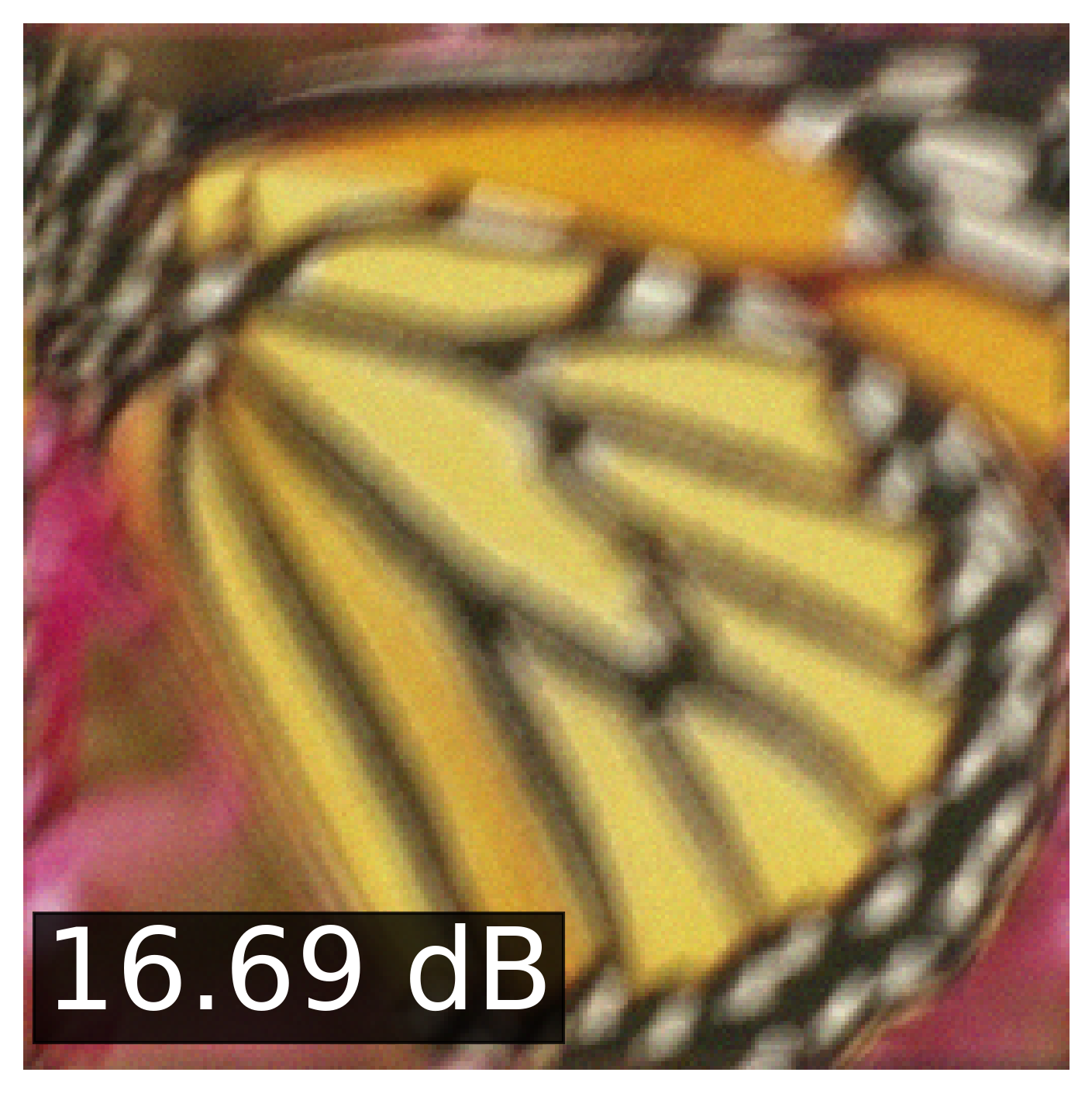}
				& \hspace{-0.3cm}
				\includegraphics[width=0.23\linewidth]{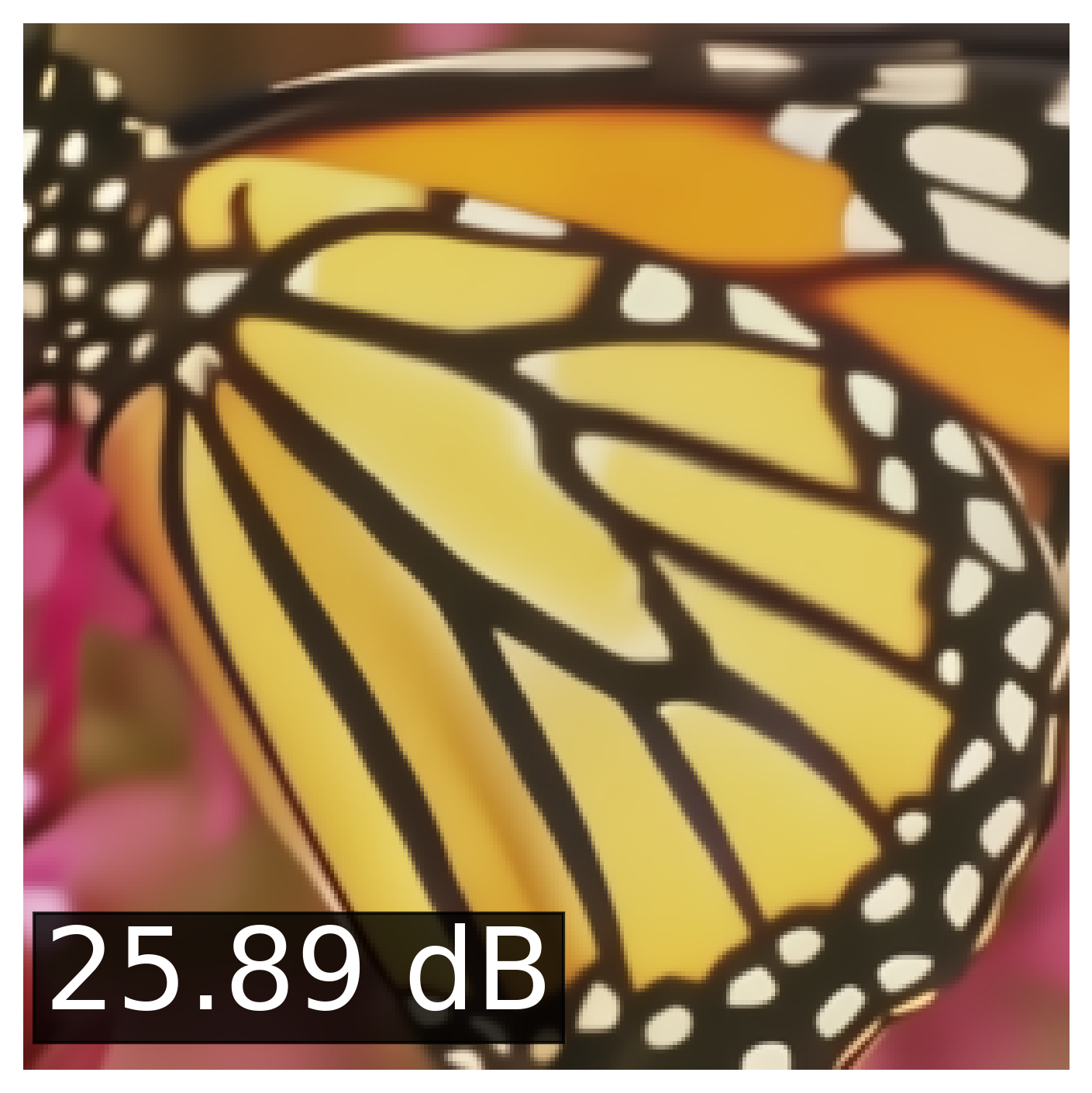}
				&\hspace{-0.3cm}
				\includegraphics[width=0.23\linewidth]{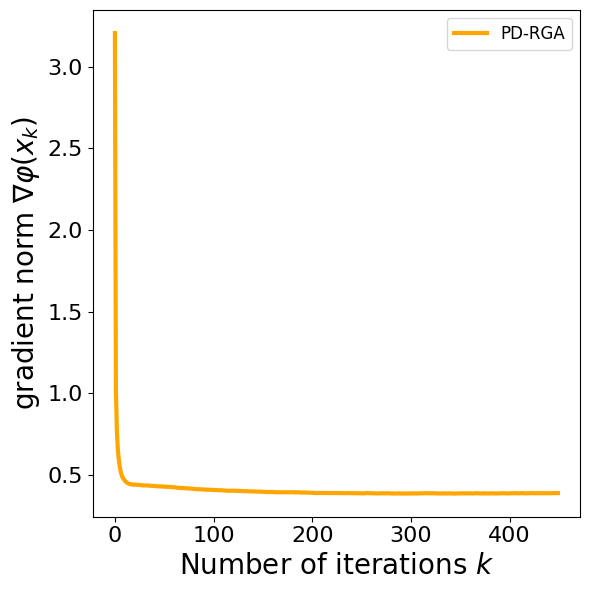}
			\end{tabular}
			\caption{Deblurring of an butterfly from the data set CBSD68 downscaled with the indicated blur kernel.}
			\label{db-exp1-butter-fly}
		\end{figure}

		\section{Conclusions and perspectives}
		In this work, we studied gradient algorithms for solving nonconvex-strongly concave min-max problems. We presented a direct convergence analysis for the gradient descent-ascent algorithm, improving upon state-of-the-art step-size rules. Leveraging our general proof strategy, we proposed the first convergence analysis for \algotwo in a weakly convex–strongly concave setting. Through numerical examples, we demonstrated that \algotwo allows to tackle regularizing inverse problems where neural networks act as proximal operators.
		
		From an application standpoint, \algotwo opens the possibility of extending the applications considered in~\cite{bot2022acceleratedminimaxalgorithmconvexconcave}, such as multi-kernel support vector machines and classification with minimax group fairness, to nonconvex scenarios. Future work will focus on extending our proof strategy to nonsmooth couplings, noneuclidean scenarios (for example using Bregman proximal operators~\cite{Cohen2025AlternatingandParallelProximalGradientMethods}), and stochastic variants of our algorithms as well as on relaxing the strong concavity assumption.

		\paragraph{Acknowledgement}	Experiments related to imaging restoration presented in this paper were carried out using the PlaFRIM experimental testbed, supported by INRIA, CNRS (LABRI and IMB), Université de Bordeaux, Bordeaux INP and Conseil Régional d’Aquitaine (see \url{https://www.plafrim.fr}). 
		
		\appendix

		
		\section{Proofs for the analysis of convergence of \algoone}\label{missing_proofs_of_algo_1}
		
		\subsection{Proof of Lemma~\ref{Algo1-lemma-1}}\label{Proof_lemma_1_algo_1}
		
		\begin{proof}
			By the descent Lemma~\ref{Descen_lemma} we get
			\begin{equation*}
				\varphi(x_{k+1}) \leq \varphi(x_k) + \langle \nabla \varphi(x_k), x_{k+1} - x_k \rangle + \frac{\lc}{2} \norm{x_{k+1} - x_k}^2.
			\end{equation*}
			By \eqref{algo_1_update_x_sect_3} we have $x_{k+1} - x_k = - \eta_x \nabla_x \Phi(x_k, y_k)$, so it implies
			\begin{equation*}
				\varphi(x_{k+1}) \leq \varphi(x_k) - \eta_x \left\langle \nabla \varphi(x_k), \nabla_x \Phi(x_k, y_k) \right\rangle + \frac{\lc\eta_x^2}{2} \norm{ \nabla_x \Phi(x_k, y_k) }^2.
			\end{equation*}
			Adding $\pm \nabla \varphi(x_{k})$ in the second argument of the last inner product produces
			\begin{multline}\label{Algo_1_lemma_1_eq1}
				\varphi(x_{k+1}) 
				\leq \varphi(x_k) - \eta_x \norm{ \nabla \varphi(x_k) }^2
				+ \eta_x \left\langle \nabla \varphi(x_k), \nabla \varphi(x_k) - \nabla_x \Phi(x_k, y_k) \right\rangle \\+ \frac{\lc \eta_x^2}{2} \norm{ \nabla_x \Phi(x_k, y_k) }^2.
			\end{multline}
			We now bound the last two terms in the right hand-side of~\eqref{Algo_1_lemma_1_eq1}. Using Young's inequality we first get
			\begin{equation}\label{Algo_1_lemma_1_eq2}
				\left\langle \nabla \varphi(x_k), \nabla \varphi(x_k) - \nabla_x \Phi(x_k, y_k) \right\rangle
				\leq \frac{1}{2} \left( \norm{ \nabla \varphi(x_k) - \nabla_x \Phi(x_k, y_k) }^2 
				+ \norm{ \nabla \varphi(x_k) }^2 \right).
			\end{equation}
			Applying Cauchy-Schwartz  we have 
			\begin{align}\label{Algo_1_lemma_1_eq3}
				\norm{ \nabla_x \Phi(x_k, y_k) }^2 
				&\leq 2 \left( \norm{ \nabla \varphi(x_k) - \nabla_x \Phi(x_k, y_k) }^2 + \norm{ \nabla \varphi(x_k) }^2 \right) .
			\end{align}
			Then by Proposition \ref{Smoothness_of_the_max_function}, we obtain
			\begin{align}\label{Algo_1_lemma_1_eq4}
				\norm{ \nabla \varphi(x_k) - \nabla_x \Phi(x_k, y_k) }^2 &= \norm{ \nabla_x \Phi(x_k, y_k^*) - \nabla_x \Phi(x_k, y_k) }^2 \leq L_{xy}^2 \norm{ y^*(x_k) - y_k }^2.
			\end{align}
			Plugging \eqref{Algo_1_lemma_1_eq2}, \eqref{Algo_1_lemma_1_eq3} and \eqref{Algo_1_lemma_1_eq4} into \eqref{Algo_1_lemma_1_eq1} produces
			\begin{equation*}
				\varphi(x_{k+1}) \leq \varphi(x_k) - \frac{\eta_x}{2}\left( 1 - 2\lc\eta_x \right) \norm{\nabla \varphi(x_k)}^2 + \frac{\eta_x}{2}\left( 1 + 2\lc\eta_x \right) L_{xy}^2 \norm{ y^*(x_k) - y_k }^2.
			\end{equation*}
			Summing over $k=0,\dots,N-1$ we deduce the result.
		\end{proof}
		
		\subsection{Proof of Lemma~\ref{Algo1-lemma-2}}
		
		\begin{proof}
			We distinguish two cases based on the source of the $\mu$-strong concavity: whether it arises from $\Phi$ or from $-h$. \\
			\textbf{Case 1:} We assume that $\Phi(x,\cdot)$ is $\mu$-strongly concave in its second component for all $x \in \R^d$. We recall that $\Phi(x_{k+1},\cdot)$ is concave and $L_{yy}$-smooth (Assumption~\ref{assumption_3}), so that in this case, we necessarily have $\mu\leq L_{yy}$ and $\kappa_y=L_{yy}/\mu\geq 1$.
			From the ascent Lemma~\ref{Descen_lemma}, we first have
			\begin{multline}\label{general_bound_deltak_eq_1}
				\Phi(x_{k+1},y_{k+1}) - h(y_{k+1}) \geq \\
				\Phi(x_{k+1},y_k) + \langle \nabla_y \Phi(x_{k+1},y_k), y_{k+1}-y_k \rangle - \frac{L_{yy}}{2}\norm{y_{k+1}-y_k}^2 - h(y_{k+1}).
			\end{multline}
			Next, by optimality condition of the $\operatorname{prox}$ operator in~\eqref{algorithm_dproxa_b_sect_3}, there exists $s_{k+1} \in \partial h(y_{k+1})$ such that $0 =  s_{k+1} + \eta_y^{-1}(y_{k+1}-y_k - \eta_y \nabla_y\Phi(x_{k+1},y_k))$. Then, by definition of the subdifferential we have $h(y^*_{k+1}) \geq h(y_{k+1}) + \langle s_{k+1},y^*_{k+1}-y_{k+1} \rangle$. Plugging this relation into \eqref{general_bound_deltak_eq_1} we obtain
			\begin{align}
				\Phi(x_{k+1},y_{k+1}) - h(y_{k+1}) \geq&\;  \Phi(x_{k+1},y_k) + \langle \nabla_y \Phi(x_{k+1},y_k), y_{k+1}-y_k \rangle - \frac{L_{yy}}{2}\norm{y_{k+1}-y_k}^2 \notag
				\\
				- h(y_{k+1}^*) 
				&- \frac{1}{\eta_y}\langle y_{k+1}-y_k , y^*_{k+1}-y_{k+1} \rangle + \langle \nabla_y \Phi(x_{k+1},y_k), y^*_{k+1}-y_{k+1} \rangle. \label{general_bound_deltak_eq_1_5}
			\end{align} 
			Using again strong-concavity,  $\Phi(x_{k+1},y^*_{k+1}) \leq \Phi(x_{k+1},y_k) + \langle \nabla_y \Phi(x_{k+1},y_k), y^*_{k+1}-y_k \rangle - \mu/2 \norm{y_{k+1}^*-y_k}^2$, from which we bound~\eqref{general_bound_deltak_eq_1_5} by below obtaining
			\begin{align*}
				\Phi(x_{k+1},y_{k+1}) - h(y_{k+1}) \geq & \ \Phi(x_{k+1},y^*_{k+1}) - h(y^*_{k+1}) - \frac{L_{yy}}{2}\norm{y_{k+1}-y_k}^2\\
				&+ \frac{\mu}{2}\norm{y^*_{k+1}-y_k}^2 - \frac{1}{\eta_y} \langle y_{k+1}-y_k, y^*_{k+1} - y_{k+1} \rangle.
			\end{align*}
			As  $\langle y_{k+1} - y_k, y^*_{k+1} - y_{k+1} \rangle = \frac12\left( \norm{y_k - y^*_{k+1}}^2 - \norm{y_{k+1}-y^*_{k+1}}^2 - \norm{y_k - y_{k+1}}^2 \right)$, we get 
			\begin{align*}
				\Phi(x_{k+1},y_{k+1}) - h(y_{k+1}) \geq& \Phi(x_{k+1},y^*_{k+1}) - h(y^*_{k+1}) - \frac{1}{2}\left(L_{yy} - \frac{1}{\eta_y} \right)\norm{y_{k+1}-y_k}^2 \\
				&-  \frac{1}{2}\left( \frac{1}{\eta_y} - \mu\right)\norm{y^*_{k+1}-y_k}^2  
				+ \frac{1}{2\eta_y}\norm{y_{k+1}-y^*_{k+1}}^2.
			\end{align*}
			Multiplying by $2\eta_y$ and rearranging the inequality we obtain
			\begin{align}
				\norm{y_{k+1}-y^*_{k+1}}^2
				\le&
				\left(\eta_y L_{yy} - 1\right)\norm{y_{k+1}-y_k}^2
				+ \left(1 - \eta_y \mu\right)\norm{y^*_{k+1}-y_k}^2 \notag \\
				&+2\eta_y \left( \Phi(x_{k+1},y_{k+1}) -h(y_{k+1}) - \Phi(x_{k+1},y^*_{k+1}) + h(y^*_{k+1}) \right). \label{general_bound_deltak_eq_2_0}
			\end{align}
			Recalling that $\eta_yL_{yy}=\tau\in (0;1]$ and $\kappa_y\geq 1$, we have $\eta_y\mu = \tau/\kappa_y \leq 1$ and~\eqref{general_bound_deltak_eq_2_0} becomes
			\begin{align}
				\norm{y_{k+1}-y^*_{k+1}}^2
				\le& \left(1 - \tau\kappa_y\right)\norm{y^*_{k+1}-y_k}^2 \notag \\
				&+ 2\tau/L_{yy} \left( \Phi(x_{k+1},y_{k+1}) -h(y_{k+1}) - \Phi(x_{k+1},y^*_{k+1}) + h(y^*_{k+1}) \right). \label{general_bound_deltak_eq_2}
			\end{align}
			The $\mu-$strong concavity of the coupling $\Phi$ w.r.t $y$-variable implies that
			\begin{equation}\label{general_bound_deltak_eq_3}
				\Phi(x_{k+1},y_{k+1}) \leq \Phi(x_{k+1},y^*_{k+1}) + \langle \nabla_y \Phi(x_{k+1},y^*_{k+1}), y_{k+1}-y^*_{k+1} \rangle - \frac{\mu}{2}\norm{y_{k+1}-y^*_{k+1}}^2.
			\end{equation}
			From Lemma~\ref{Lipschitz_continuity_of_the_solution_mapping} we get $0 \in \nabla_y \Phi(x_{k+1},y^*_{k+1}) - \partial h(y^*_{k+1})$. We thus have $\nabla_y \Phi(x_{k+1},y^*_{k+1})\in \partial h(y^*_{k+1})$ so that
			\begin{equation}\label{general_bound_deltak_eq_4}
				-h(y_{k+1}) \leq -h(y^*_{k+1}) - \langle \nabla_y \Phi(x_{k+1},y^*_{k+1}), y_{k+1}-y^*_{k+1} \rangle.
			\end{equation}
			Summing \eqref{general_bound_deltak_eq_3} and \eqref{general_bound_deltak_eq_4} then gives
			\begin{equation*}
				\Phi(x_{k+1},y_{k+1})-h(y_{k+1}) \leq \Phi(x_{k+1},y^*_{k+1})-h(y^*_{k+1}) - \frac{\mu}{2}\norm{y_{k+1}^*-y_{k+1}}^2.
			\end{equation*}
			Replacing the last inequality in \eqref{general_bound_deltak_eq_2}, we get 
			\begin{align}
				\norm{y_{k+1}-y^*_{k+1}}^2 &\leq \left(1-\frac{\tau\mu}{L_{yy}}\right)\norm{y_{k+1}^*-y_k}^2 - \frac{\tau\mu}{L_{yy}}\norm{y^*_{k+1}-y_{k+1}}^2, \notag\\
				\left(1+\frac{\tau\mu}{L_{yy}}\right) \norm{y_{k+1}-y^*_{k+1}}^2 &\leq \left(1-\frac{\tau\mu}{L_{yy}}\right)\norm{y^*_{k+1}-y_k}^2 .\notag
			\end{align}
			Recalling that $\kappa_y=L_{yy}/\mu$, we obtain     
			\begin{align}         
				\delta_{k+1}=\norm{y_{k+1}-y^*_{k+1}}^2&\leq\left( \frac{\kappa_y-\tau}{\kappa_y+\tau}\right)\norm{y^*_{k+1}-y_k}^2 \leq \left( \frac{\kappa_y}{\kappa_y+\tau}\right)^2\norm{y^*_{k+1}-y_k}^2.
				\label{general_bound_deltak_eq_5}
			\end{align}
			\textbf{Case 2:} We now assume that $h$ is strongly convex. Our strategy in this case is to exploit the fact that the operator $\operatorname{prox}_h$  is a contraction. Also note that $\Id + \eta_y \nabla_y \Phi(z,\cdot)$ for $z \in \R^d$ is non-expansive, \textit{i.e},
			\begin{equation}\label{chp_3_ascent_op_eq_1}
				\norm{\left(\Id + \eta_y \nabla_y \Phi(z,\cdot) \right)x - \left(\Id+ \eta_y \nabla_y \Phi(z,\cdot) \right)y}^2 \leq \norm{x-y}^2.
			\end{equation}
			Using \cite[Lem. 5.5, pp. 13]{renaud2025moreauenvelope} we have that $\operatorname{prox}_{\eta_y h}$ is $1/(1+\mu\eta_y)$-Lipschitz and by the fact that the theoretical maximizer $y^*_{k+1}$ is a fixed point of~\eqref{algorithm_dproxa_b_sect_3}. We have that 
			\begin{align}\label{chp_3_ascent_op_eq_1_5}
				\norm{y^*_{k+1}-y_{k+1}} &= \norm{\prox{\eta_y h}(y^*_{k+1} - \eta_y \nabla_y \Phi(x_{k+1},y^*_{k+1})) - \prox{\eta_y h}(y_{k} - \eta_y \nabla_y \Phi(x_{k+1},y_{k}))}^2  \notag\\ 
				&\leq \left(\frac{1}{1 + \mu\eta_y} \right)^2\norm{y^*_{k+1}-y_{k} + \eta_y \left( \nabla_y \Phi(x_{k+1},y^*_{k+1})-\nabla_y \Phi(x_{k+1},y_{k})\right)}^2 \notag \\
				&\underset{\leq}{\eqref{chp_3_ascent_op_eq_1}} \left(\frac{1}{1 + \mu\eta_y} \right)^2 \norm{y^*_{k+1}-y_{k}}^2.
			\end{align}
			Since $\delta_{k+1}=\norm{y^*_{k+1}-y_{k+1}}$, $\eta_y=\tau/L_{yy}$, with $0<\tau \leq 1$, and $\kappa_y=L_{yy}/\mu$,~\eqref{chp_3_ascent_op_eq_1_5} becomes
			\begin{equation}\label{chp_2_lemma_33_eq_2}
				\delta_{k+1} \leq \left( \frac{\kappa_y}{\kappa_y + \tau}\right)^2 \norm{y^*_{k+1}-y_k}^2.
			\end{equation}
			
			\textbf{Conclusion} Both cases yield the same control (relations~\eqref{general_bound_deltak_eq_5} and \eqref{chp_2_lemma_33_eq_2}) for $\delta_k$, so it does not matter whether the strong concavity originates from $\Phi$ or $-h$.  Now using triangular and  Young inequalities in \eqref{chp_2_lemma_33_eq_2}, we deduce
			\begin{align*}
				\delta_{k+1} &\leq \left( \frac{\kappa_y}{\kappa_y + \tau}\right)^2\left(\norm{y^*_k-y_k}^2\right) + 2\norm{y^*_k-y_k}\norm{y^*_{k+1}-y^*_k}  + \norm{y^*_{k+1}-y^*_{k}}^2 \notag \\
				& \leq \left( \frac{\kappa_y}{\kappa_y + \tau}\right)^2 (1+\theta) \norm{y^*_k-y_k}^2 +\left( \frac{\kappa_y}{\kappa_y + \tau}\right)^2 \left(1+\frac{1}{\theta} \right)\norm{y^*_{k+1}-y^*_k}^2.
			\end{align*}
			for any $\theta>0$. Choosing $\theta^*=\frac{(2\kappa_y-\tau)(\kappa_y + \tau)^2}{2\kappa_y^3}-1$ that is strictly positive for $0< \tau \leq 1\leq\kappa_y$, we obtain $\delta_{k+1} \leq \left( 1 - \frac{\tau}{2\kappa_y} \right) \delta_k + \frac{\kappa_y}{\tau} \norm{ y_{k+1}^* - y_k^* }^2$, where we used the fact that $\left( \frac{\kappa_y}{\kappa_y + \tau}\right)^2 \left(1+\frac{1}{\theta^*} \right)
			=\frac{\kappa_y^2(2\kappa_y-\tau)}{\tau(3\kappa_y^2-\tau^2)}  \leq \frac{\kappa_y}{\tau}$ for $0<\tau\leq 1\leq\kappa_y$.
		\end{proof}

		\section{Proof of Lemma~\ref{Algo2_lemma_1} for the analysis of  \algotwo}\label{app:lem_alg2}
		
		\begin{proof}
			Since $\varphi$ is $\rho-$weakly convex (Assumption~\ref{assumption_2}(iii)), we have
			\begin{equation*}
				\varphi(x_{k+1}) \leq \varphi(x_k) + \langle \nabla \varphi(x_{k+1}), x_{k+1}-x_k \rangle + \frac{\rho}{2}  \norm{x_k -x_{k+1}}^2.
			\end{equation*}
			From first order optimality condition of \eqref{algo_2_update_x_sect_4} we have $x_{k+1}-x_k = -\eta_x \nabla_x \Phi(x_{k+1},y_k)$. Then
			\begin{equation*}
				\varphi(x_{k+1}) \leq \varphi(x_k) -\eta_x \langle \nabla \varphi(x_{k+1}), \nabla_x \Phi(x_{k+1},y_k) \rangle + \frac{\rho \eta^2_x}{2} \norm{\nabla_x \Phi(x_{k+1},y_k)}^2.
			\end{equation*}
			Adding $\pm \nabla \varphi(x_{k+1})$ in the second argument of the last inner product we get
			\begin{align}\label{Algo_2_lemma_1_eq2}
				\varphi (x_{k+1}) \leq& \varphi(x_k) -\eta_x \norm{\nabla \varphi(x_{k+1})}^2 +\eta_x \langle \nabla \varphi(x_{k+1}),   \nabla \varphi(x_{k+1}) - \nabla_x \Phi(x_{k+1},y_k) \rangle \notag \\
				&+ \frac{\rho \eta^2_x}{2} \norm{\nabla_x \Phi(x_{k+1},y_k)}^2.
			\end{align}
			Applying Young's inequality we obtain
			\begin{equation*}
				\langle \nabla \varphi(x_{k+1}), \nabla \varphi(x_{k+1}) - \nabla_x \Phi(x_{k+1},y_k) \rangle \leq \frac{1}{2}\norm{\nabla \varphi(x_{k+1})}^2 + \frac{1}{2} \norm{\nabla \varphi(x_{k+1}) - \nabla_x \Phi(x_{k+1},y_k)}^2.
			\end{equation*}
			By Cauchy-Schwartz inequality we get
			\begin{equation*}
				\norm{\nabla_x \Phi(x_{k+1},y_k)}^2 \leq 2\norm{\nabla \varphi(x_{k+1})}^2 + 2 \norm{\nabla_x \Phi(x_{k+1},y_k)-\nabla \varphi(x_{k+1})}^2.
			\end{equation*}
			Using Proposition \ref{Smoothness_of_the_max_function} we have 
			\begin{align}
				\norm{\nabla \varphi(x_{k+1})-\nabla_x \Phi(x_{k+1},y_k)}^2 &=\norm{\nabla_x \Phi(x_{k+1},y^*(x_{k+1}))-\nabla_x \Phi(x_{k+1},y_k)}^2 \notag \\
				&\leq L_{xy}^2 \norm{y^*(x_{k+1})-y_k}^2. \notag
			\end{align}
			Plugging the last three inequalities in \eqref{Algo_2_lemma_1_eq2} leads to
			\begin{equation*}
				\varphi(x_{k+1}) \leq \varphi(x_k) -\frac{\eta_x}{2}  \left( 1 - 2\rho \eta_x\right) \norm{\nabla \varphi(x_{k+1})}^2 + \frac{\eta_x}{2}\left( 1 + 2 \rho \eta_x \right) L_{xy}^2 \norm{y^*(x_{k+1}) -y_k}^2.
			\end{equation*}
		\end{proof}

		\FloatBarrier
		\bibliographystyle{plainnat}
		\bibliography{biblio.bib}
		
	\end{document}